\documentclass{amsart}
\usepackage{biblatex} 
\usepackage{graphicx} 
\usepackage{amsmath}
\usepackage{bbm}
\usepackage{amssymb}
\usepackage{amsthm}
\usepackage{hyperref}
\usepackage{tikz}
\usetikzlibrary{tikzmark, fit, calc}
\usepackage{chemfig}
\usepackage[version=3]{mhchem}
\usepackage{qtree}
\usepackage{tabularx}
\usepackage{enumerate}
\usepackage{breqn}
\usepackage[makeroom]{cancel}
\usepackage{mathrsfs}
\usepackage{etoolbox}
\usepackage{caption}
\usepackage{adjustbox}
\captionsetup[table]{labelsep=period}
\captionsetup[figure]{labelsep=period}

\mathsurround=2pt
\newtheorem{thm}{Theorem}[section]
\newtheorem{dfn}[thm]{Definition}
\newtheorem{crl}[thm]{Corollary}
\newtheorem{prp}[thm]{Proposition}
\newtheorem{lm}[thm]{Lemma}
\newtheorem{rmk}[thm]{Remark}


\tikzset{
    main_node/.style={circle,fill=white,draw,minimum size=0.25cm,inner sep=0pt},
    inv_node/.style={circle, fill=white, minimum size=0cm, inner sep=0pt},
    node_2/.style={circle,fill=white,draw,minimum size=0.5cm,inner sep=0pt},
            }
\tikzset {
    treenode/.style = {align=center, inner sep=1pt},
    node_black/.style={treenode, circle, black, draw=black, fill=black},
    node_white/.style={treenode, circle, white, draw=black, fill=white},
    node_black_lb/.style={treenode, circle, white, draw=black, fill=black},
    node_white_lb/.style={treenode, circle, black, draw=black, fill=white},
    node_invisible/.style={treenode, circle, black, draw=white, fill=white}
}

\title{Enumeration of multipartite series-reduced trees}
\author{Medet Jumadildayev}
\address{Institute of Mathematics and Mathematical Modeling, Almaty, Kazakhstan}
\address{Nazarbayev University, Astana, Kazakhstan}
\email{medet.jumadildayev@nu.edu.kz}
\subjclass[2020]{05C05, 05C07, 05C15, 11B73}

\bibliography{references} 

\begin{document}

\begin{abstract}
    We obtain a generating function for the degree sequences and colors of rooted multipartite labeled series-reduced trees. As an application of this result, we determine the number of symbolic ultrametrics (introduced by Böcker and Dress) and increasingly labeled processes. We also find that the number of multipartite labeled series-reduced trees and the colored chain-increasing binary trees are the same. We obtain the number of rooted multipartite unlabeled series-reduced trees. We also find a refinement of the result of Riordan and Shannon.
\end{abstract}

\maketitle

\section{Introduction}

A \textit{rooted tree} is a connected graph without cycles with a distinguished vertex called the \textit{root}. The out-degree of a vertex $v$ in a rooted tree $T$ is the number of children of $v$ in $T$. The \textit{leaves} of a rooted tree $T$ are vertices with out-degree $0$, and all other vertices are called \textit{inner vertices}. We call a rooted tree \textit{series-reduced} if there are no vertices with out-degree $1$.  A rooted series-reduced tree is \textit{leaf-labeled} if one can assign distinct labels to its leaves. For simplicity, we will refer to leaf-labeled rooted series-reduced trees as \textit{labeled series-reduced trees} \cite{RiordanSchroeder}, \cite{RiordanShannon}. A multipartite (or $m$\nobreakdash-partite) labeled series-reduced tree is a tree with inner vertices colored in $m$ colors such that no two vertices have the same color. 

The main result of our paper is the generating function for the degree sequences and colors of the rooted $m\text{-partite}$ labeled series-reduced trees.

\begin{thm} \label{main_result}
Let $m$ denote the number of colors. Additionally, for all colors $1 \leq c \leq m$ denote degree functions $x_c(t)$ and their compositional inverses $x_c^{\langle -1 \rangle}(t)$ as

    \[
    x_c(t) = t + \sum_{n \geq 2} x_{c, n} \dfrac{t ^ n}{n!},
    \]

    \[
    x_c^{\langle -1 \rangle}(t) = t + \sum_{n \geq 2} \dfrac{t ^ n}{n!} x_{c,n} ^ {\langle -1 \rangle}.
    \] 
    Then, the generating function for multipartite rooted labeled series-reduced trees on $m$ colors can be stated in terms of a compositional inverse of the sum of compositional inverses of degree functions as

    \[
    P(m, t,  x) = \left(t +  \sum_{c = 1} ^ {m} \left(x_c ^ {\langle -1 \rangle} (t) - t\right) \right) ^ {\langle -1 \rangle}.
    \]
\end{thm}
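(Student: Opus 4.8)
The plan is to realize $P(m,t,x)$ as the exponential generating function, in the number of labeled leaves (marked by $t$), for the class of $m$-partite labeled series-reduced trees, where an inner vertex of color $c$ with out-degree $n$ contributes the formal weight $x_{c,n}$. Reading off the recursive structure of such a rooted tree — a root that is either a single leaf or a colored inner vertex sitting above an unordered collection of subtrees — will produce a system of functional equations. The crucial auxiliary objects are, for each color $c$, the series $B_c = B_c(t)$ enumerating those trees whose root is \emph{not} an inner vertex of color $c$. These are exactly the trees that may legally appear as a child of a color-$c$ vertex, since the multipartite condition forbids a parent and child from sharing a color; a leaf child is always admissible, so $B_c$ has linear term $t$.

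First I would write the contribution of the trees rooted at a color-$c$ inner vertex. Because the $n \geq 2$ children form an unordered set of $B_c$-structures supported on disjoint blocks of the leaf labels, the labeled-set construction gives the exponential generating function $\sum_{n \geq 2} x_{c,n} B_c^{\,n} / n! = x_c(B_c) - B_c$, using the definition of the degree function $x_c$ (its constant term vanishes and its linear term is $B_c$). Summing over the two possibilities for the root — a leaf, or a color-$c$ vertex for some $c$ — yields $P = t + \sum_{c=1}^m \left(x_c(B_c) - B_c\right)$.

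Next comes the key step, which I would carry out by complementary counting. Since $B_c$ is obtained from $P$ by deleting precisely the color-$c$-rooted trees, we have $B_c = P - \left(x_c(B_c) - B_c\right)$, and this collapses to $x_c(B_c) = P$, that is $B_c = x_c^{\langle -1\rangle}(P)$; the compositional inverse exists because $x_c(t) = t + O(t^2)$. Substituting back into the equation for $P$ and again using $x_c(B_c) = P$ gives $P = t + \sum_{c=1}^m \left(P - B_c\right) = t + \sum_{c=1}^m \left(P - x_c^{\langle -1\rangle}(P)\right)$, which rearranges to $t = P + \sum_{c=1}^m \left(x_c^{\langle -1\rangle}(P) - P\right)$.

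Finally, I would recognize the right-hand side as $F(P)$, where $F(u) = u + \sum_{c=1}^m \left(x_c^{\langle -1\rangle}(u) - u\right)$ is exactly the series appearing inside the theorem; since $x_c^{\langle -1\rangle}(u) - u = O(u^2)$ we have $F(u) = u + O(u^2)$, so $F^{\langle -1\rangle}$ exists, and inverting $t = F(P)$ yields $P = F^{\langle -1\rangle}(t)$, the claimed formula. As a consistency check, for $m = 1$ the relation becomes $t = x_1^{\langle -1\rangle}(P)$, so $P = x_1(t)$ and $B_1 = t$, correctly recording that the only one-color trees are stars of labeled leaves. The main obstacle I anticipate is the bookkeeping in the first two steps: making the species/EGF translation of "an unordered set of children, each colored differently from the parent" fully rigorous, and correctly identifying $B_c$ by complementation so that the $x_c(B_c) - B_c$ terms telescope. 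Once $B_c = x_c^{\langle -1\rangle}(P)$ is established, the remainder is a formal rearrangement and inversion.
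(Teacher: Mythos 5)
Your proposal is correct and follows essentially the same route as the paper: the same decomposition by the color of the root, the same identification of the trees admissible as children of a color-$c$ vertex (your $B_c$ is the paper's $P(m,t,x) - P((m,c),t,x)$, and your relation $x_c(B_c)=P$ is the paper's $P = x_c \circ (P - P((m,c)))$), followed by the same summation over colors and inversion. The only difference is presentational --- you work directly with composition of exponential generating functions and invert the functional equation $t = F(P)$, whereas the paper phrases the identical steps coefficient-wise via Bell polynomials and its recursive Lagrange inversion lemma; your version is in fact slightly more careful in retaining the $+t$ term that the paper's intermediate display $\bigl(\sum_c x_c^{\langle -1\rangle}\bigr)\circ P = (m-1)P$ drops.
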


Alibek Adilzhan independently discovered a formula for $P(2, t, x)$. A corollary of Theorem \ref{main_result} is a formula for the number of rooted $m\text{-partite}$ labeled series-reduced trees. It is also the number of symbolic ultrametrics in \cite{Dress}, when $M$ is finite.

\begin{crl}\label{main_crl}
    The generating function for the number of symbolic ultrametrics $D: X \times X \rightarrow M$ where $|M| = m$, or equivalently the number of $m\text{-partite}$ labeled series-reduced trees, equals the following expression
    
    \[
    A(m, t) = (t(1 - m) + m\log(1 + t)) ^ {\langle -1 \rangle}.
    \]
    Let $!n_k$ denote the number of derangements of length $n$ which have $k$ cycles. Then, the number $a_s(m)$ of symbolic ultrametrics $D: X \times X \rightarrow M$ such that $|X| = s$ and $|M| = m$ can be computed as

    \[
    a_s(m) = (-1) ^ {s - 1} \sum_{k = 0} ^ {s} (-m) ^ k (!(s + k - 1)_k).
    \]
\end{crl}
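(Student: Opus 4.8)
The plan is to deduce both statements from Theorem \ref{main_result} by specializing the degree data and then extracting coefficients. For the generating function $A(m,t)$, I would first observe that counting trees without recording any degree information amounts to setting every coefficient $x_{c,n}=1$, so that each degree function collapses (independently of the color $c$) to $x_c(t)=t+\sum_{n\ge 2}t^n/n! = e^t-1$, whose compositional inverse is $x_c^{\langle -1\rangle}(t)=\log(1+t)$. Substituting into the formula of Theorem \ref{main_result} gives
\[
A(m,t)=\Bigl(t+\sum_{c=1}^{m}\bigl(\log(1+t)-t\bigr)\Bigr)^{\langle -1\rangle}=\bigl(t(1-m)+m\log(1+t)\bigr)^{\langle -1\rangle},
\]
which is the claimed closed form; the identification of $A(m,t)$ with the count of symbolic ultrametrics is supplied by the correspondence recalled from \cite{Dress}.

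For the explicit coefficients I would use that $A(m,t)$ is an exponential generating function, so $a_s(m)=s!\,[t^s]A(m,t)$; a quick sanity check gives $a_1(m)=1$ and $a_2(m)=m$, matching the right-hand side. Writing $f(t)=t(1-m)+m\log(1+t)=t-m\,D(-t)$ with $D(s):=-\log(1-s)-s=\sum_{j\ge 2}s^j/j$, I would apply Lagrange inversion in the form $[t^s]A(m,t)=\tfrac1s[t^{s-1}](t/f(t))^s$. Since $t/f(t)=\bigl(1-mD(-t)/t\bigr)^{-1}$, the binomial series yields
\[
\Bigl(\tfrac{t}{f(t)}\Bigr)^s=\sum_{k\ge 0}\binom{s+k-1}{k}m^k\,\frac{D(-t)^k}{t^k},
\]
so that $[t^{s-1}](t/f(t))^s=\sum_{k\ge 0}\binom{s+k-1}{k}m^k\,[t^{s+k-1}]D(-t)^k$.

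The key input is the exponential formula: $D(s)$ is the cycle generating function for cycles of length at least two, whence the EGF for derangements marked by cycle number is $D(s)^k/k!=\sum_n (!n_k)\,s^n/n!$, giving $[s^n]D(s)^k=\tfrac{k!}{n!}\,!n_k$ and therefore $[t^{s+k-1}]D(-t)^k=(-1)^{s+k-1}\tfrac{k!}{(s+k-1)!}\,!(s+k-1)_k$. Substituting and using the collapse $\binom{s+k-1}{k}\tfrac{k!}{(s+k-1)!}=\tfrac1{(s-1)!}$ together with $s!\cdot\tfrac1s\cdot\tfrac1{(s-1)!}=1$, the binomial coefficients cancel cleanly and one is left with exactly $a_s(m)=(-1)^{s-1}\sum_{k\ge 0}(-m)^k\,!(s+k-1)_k$. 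The sum truncates at $k=s$ because $!(s+k-1)_k=0$ as soon as $s+k-1<2k$, so the stated upper limit $s$ is harmless.

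I expect the main obstacle to be bookkeeping rather than conceptual: correctly tracking the sign produced by the substitution $s\mapsto -t$ in $D$, and verifying the factorial cancellation that turns the Lagrange-inversion prefactor and the binomial coefficient into the constant $1/(s-1)!$. The one genuinely structural ingredient to justify carefully is the derangement-by-cycles identity $[s^n]D(s)^k=\tfrac{k!}{n!}\,!n_k$, which I would derive from the standard bivariate EGF $\exp\bigl(u(-\log(1-s)-s)\bigr)$ for fixed-point-free permutations graded by number of cycles.
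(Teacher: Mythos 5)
Your proof is correct. The first half (specializing $x_{c,n}=1$, collapsing each degree function to $e^t-1$ and its inverse to $\log(1+t)$, then invoking Theorem \ref{main_result}) is exactly what the paper does. The second half takes a genuinely different route to the coefficient formula: the paper extracts $a_s(m)$ by specializing its general closed form (Theorem \ref{closed_form}, itself built on Comtet's inversion formula, Lemma \ref{comtet}) and then identifying the resulting Bell polynomial $B_{s+k-1,k}(0,\{(i-1)!\})$ with $!(s+k-1)_k$ via Lemma \ref{derangements}; you instead apply classical Lagrange inversion directly to $f(t)=t-mD(-t)$ with $D(s)=-\log(1-s)-s$, expand $(t/f)^s$ by the binomial series, and identify $[s^n]D(s)^k=\tfrac{k!}{n!}\,!n_k$ via the exponential formula. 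These are two dressings of the same underlying computation --- the Bell polynomial $B_{n,k}(y)$ is precisely $\tfrac{n!}{k!}[t^n]y(t)^k$, so your derangement identity is Lemma \ref{derangements} in EGF form, and Comtet's formula is a packaged Lagrange inversion --- but yours is more self-contained (it bypasses the Bell-group machinery of Section 2 entirely), while the paper's route buys the general weighted formula of Theorem \ref{closed_form}, which it reuses for other specializations such as the mobile count. Your bookkeeping checks out: the sign $(-1)^{s+k-1}$ from $D(-t)^k$, the cancellation $\binom{s+k-1}{k}\tfrac{k!}{(s+k-1)!}=\tfrac{1}{(s-1)!}$ against the prefactor $s!/s$, and the truncation at $k=s$ (indeed $!(s+k-1)_k=0$ once $k\ge s$) are all as you state.
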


\begin{rmk}
    Note that the coefficients of the polynomial $a_s(m)$ are given explicitly.
\end{rmk}

\noindent{\bf Example.} For the table of values $a_s(m)$, please refer to Table \ref{tab:symb_values}. Below we provide polynomials $a_s(m)$ for $s = 1, 2, \cdots, 7$. 

\[
a_1(m) = 1,
\]
\[
a_2(m) = m,
\]
\[
a_3(m) = 3m ^ 2 - 2m,
\]
\[
a_4(m) = 15 m^3-20 m^2+6 m,
\]
\[
a_5(m) = 105 m^4-210 m^3+130 m^2-24 m,
\]
\[
a_6(m) = 945 m^5-2520 m^4+2380 m^3-924 m^2+120 m,
\]
\[
a_7(m) = 10395 m^6-34650 m^5+44100 m^4-26432 m^3+7308 m^2-720 m.
\]

\section{Bell Groups}

In this section, we outline a technique used to extract coefficients from generating functions. We define the notion of Bell Groups and outline their properties. First, let's define Bell Polynomials.

\begin{dfn} A Bell polynomial $B_{n, k}(x) = B_{n, k}(x_1, x_2, \cdots, x_{n - k + 1})$ is defined as
$$B_{n, k}(x) = \sum_{\alpha} \frac{n!}{\alpha_1! \alpha_2! \cdots \alpha_{n - k + 1}!} \left(\frac{x_1}{1!}\right) ^ {\alpha_1} \left(\frac{x_2}{2!}\right) ^ {\alpha_2} \cdots \left(\frac{x_{n - k + 1}}{(n - k + 1)!}\right) ^ {\alpha_{n - k + 1}},$$
where the sum is taken over all sequences $\alpha$ of non-negative integers satisfying following two conditions.

\begin{enumerate}
    \item $\alpha_1 + \alpha_2 + \alpha_3 + \cdots + \alpha_{n - k + 1} = k$,
    \item $\alpha_1 +2 \alpha_2 + 3\alpha_3 + \cdots + (n - k+ 1) \alpha_{n - k + 1} = n$.
\end{enumerate}
\end{dfn}
Let $V$ be a set of infinite sequences ${ v}=(v_1,v_2,\ldots )$ such that $v_1 \neq 0$.
\begin{dfn} Coordinates of a Bell product $ x \circ  y\in  V$ of two sequences $ x, y \in V$ are defined as a sum of Bell polynomials.
    $$( x \circ  y)_n=\sum_{k = 1} ^ {n} x_k B_{n, k} ( y).$$
\end{dfn}
For convenience, we will denote the Bell product as
$$B_n( x,  y) = \sum_{k = 1} ^ {n} x_k B_{n, k}( y).$$
The set $ V$ equipped with the Bell Product is a group. This fact is well known. For completeness, we give a formal proof. 

\begin{thm}\label{bell_group}
    Set $ V$ equipped with the Bell product forms a group. We call it a Bell group.
\end{thm}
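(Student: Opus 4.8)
The plan is to identify $V$ with the group of exponential generating functions under functional composition, where the entire group structure is classical, and to transport closure, identity, associativity, and inverses across this identification. To a sequence $v = (v_1, v_2, \ldots) \in V$ I associate the formal power series $\hat v(t) = \sum_{n \geq 1} v_n t^n/n!$, which has zero constant term and nonzero linear coefficient precisely because $v_1 \neq 0$. The first step is to record the standard generating-function identity for Bell polynomials, namely that for each fixed $k$ one has $\sum_{n \geq k} B_{n,k}(y)\, t^n/n! = \hat y(t)^k / k!$; this follows directly from expanding $\hat y(t)^k$ by the multinomial theorem and matching it against the definition of $B_{n,k}$. Summing the defining relation $(x \circ y)_n = \sum_{k=1}^{n} x_k B_{n,k}(y)$ against $t^n/n!$ and interchanging the order of summation then yields $\widehat{x \circ y}(t) = \hat x(\hat y(t))$, so the Bell product corresponds exactly to composition of generating functions.

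With this dictionary in hand, most of the group axioms become transparent. For closure I would compute the linear coefficient directly: $(x \circ y)_1 = x_1 B_{1,1}(y) = x_1 y_1 \neq 0$, so $x \circ y \in V$. For the identity I take $e = (1, 0, 0, \ldots)$, whose associated series is $\hat e(t) = t$; since the series $t$ is a two-sided neutral element for composition, the dictionary gives $x \circ e = x = e \circ x$. This can also be verified coefficientwise from $B_{n,1}(y) = y_n$ together with the values $B_{n,n}(e) = 1$ and $B_{n,k}(e) = 0$ for $k < n$.

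The step I expect to be the main obstacle, if attacked head-on, is associativity, because a direct proof requires a nontrivial convolution identity among Bell polynomials. I would avoid that computation entirely by invoking the associativity of functional composition: since $\widehat{x \circ y} = \hat x \circ \hat y$ and ordinary composition of power series is associative, we obtain $\widehat{(x \circ y) \circ z} = (\hat x \circ \hat y) \circ \hat z = \hat x \circ (\hat y \circ \hat z) = \widehat{x \circ (y \circ z)}$, and the map $v \mapsto \hat v$ is injective, so $(x \circ y) \circ z = x \circ (y \circ z)$.

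Finally, for inverses I would use $x_1 \neq 0$ to solve recursively for the compositional inverse $\hat x^{\langle -1 \rangle}(t) = t/x_1 + \cdots$: the equation $\hat x(\hat x^{\langle -1 \rangle}(t)) = t$ determines the $n$-th coefficient of $\hat x^{\langle -1 \rangle}$ uniquely in terms of the lower ones, the leading coefficient being $1/x_1 \neq 0$, so the inverse again lies in the image of $V$. Calling the corresponding sequence $x^{-1}$, the composition dictionary gives $x \circ x^{-1} = e = x^{-1} \circ x$, completing the verification that $(V, \circ)$ is a group.
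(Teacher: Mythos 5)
Your proposal is correct and follows essentially the same route as the paper: both identify the Bell product with composition of exponential generating functions via the identity $\sum_{n\ge k} B_{n,k}(y)\,t^n/n! = \hat y(t)^k/k!$, and then transport associativity, the identity element $e=(1,0,0,\dots)$, and the recursively constructed compositional inverse back to $V$. The only difference is cosmetic: the paper derives the key identity by weighting two-level rooted trees with labeled leaves, whereas you derive it from the multinomial theorem; your explicit checks of closure ($(x\circ y)_1 = x_1y_1 \neq 0$) and that the inverse stays in $V$ (leading coefficient $1/x_1\neq 0$) are small completeness points the paper leaves implicit.
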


\begin{proof}
Our approach involves enumerating non-colored, rooted trees with labeled leaves. All the leaves in these trees are exactly at a distance $2$ from the root. In other words, the parents of the leaves of a tree are the children of the root. We will show that the enumeration of such trees is equivalent to the Bell product operation and also to function composition.

Let us define the weight of a tree $T$ with $n$ leaves as follows
    $$w(T) = \frac{t ^ n}{n!} x_k \prod_{u \in children(root)} y_{|children(u)|},$$
 where $n$ is the number of leaves, $k$ is the number of children of the root, $root$ is the root of the tree $T$, $children(u)$ is the set of children of vertex $u$ in $T$.
        
    Figure 1 illustrates a tree with $9$ leaves, a root with $3$ children, and intermediate vertices with degrees $2, 3, 4$, respectively. That is why, by definition, its weight is $\frac{t ^ 9}{9!}x_3 y_2y_3y_4$. 
    
    \begin{figure}[ht]
        \centering
        \begin{tikzpicture}[grow'=up, level distance=1cm,
        level 1/.style={sibling distance=3cm},
        level 2/.style={sibling distance=1cm}]

        \node[circle,draw,scale=0.7] {}
            child {node[circle,draw,scale=0.7] {}
            child {node[circle,draw,scale=0.7] {1}}
            child {node[circle,draw,scale=0.7] {2}}
        }
        child {node[circle,draw,scale=0.7] {}
        child {node[circle,draw,scale=0.7] {3}}
        child {node[circle,draw,scale=0.7] {4}}
        child {node[circle,draw,scale=0.7] {5}}
        }
        child {node[circle,draw,scale=0.7] {}
        child {node[circle,draw,scale=0.7] {6}}
        child {node[circle,draw,scale=0.7] {7}}
        child {node[circle,draw,scale=0.7] {8}}
        child {node[circle,draw,scale=0.7] {9}}
        };

        \end{tikzpicture}
        \caption{A tree with $9$ leaves and weight $\frac{t ^ 9}{9!}x_3 y_2 y_3 y_4.$}
    \end{figure}
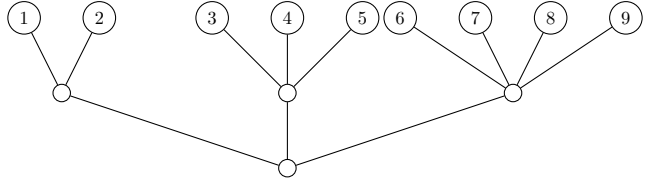
    The generating function of rooted trees with $n$ leaves is
    $$\sum_{|leaves(T)| = n} w(T) = \frac{t ^ n}{n!}\sum_{k = 1} ^ {n} x_k B_{n, k} ( y).$$
    Then, the generating function $A(t)$ of all trees can be expressed as
    \begin{dmath*}
        A(t) = \sum_{n \geq 1} \sum_{|leaves(T)| = n} w(T) =  \sum_{n \geq 1} \frac{t ^ n}{n!} \sum_{k = 1} ^ {n} x_k B_{n, k} ( y).
    \end{dmath*}
    On the other hand, if $y(t) = \sum_{n \geq 1} y_n \frac{t ^ n}{n!}$ is a generating function for subtrees, the generating function $A(t)$ of trees satisfies the following relation
    $$A(t) = \sum_{k \geq 1} x_k \frac{y(t) ^ n}{n!}.$$
    Hence,
    $$\sum_{k \geq 1} x_k \frac{y(t) ^ n}{n!} = \sum_{n \geq 1} \frac{t ^ n}{n!} \sum_{k = 1} ^ {n} x_k B_{n, k} ( y).$$
    This implies that coordinates of the Bell product $ x \circ  y$ are equal to elements of a composition of two functions $x(y(t)),$ where
    $$x(t) = \sum_{n \geq 1} x_n \frac{t ^ n}{n!},$$
    $$y(t) = \sum_{n \geq 1} y_n \frac{t ^ n}{n!}.$$
    Specifically, it follows that the following relation holds
    \begin{equation}\label{iso_comp}
        ( x \circ  y)_n = \left[ \frac{t ^ n}{n!}\right] x(y(t)).
    \end{equation}
    Because function composition is associative, so is the Bell product. The identity element is $ e = (1, 0, 0, \ldots)$. Indeed,
    $$ x \circ  e =  x,$$
    $$ e \circ  x =  x.$$
    The inverse element $ x ^ {\langle -1 \rangle}$ should satisfy the following relation
    $$ x \circ  x^{\langle -1 \rangle} =  x^{\langle -1 \rangle} \circ  x =  e.$$
    Thus, the coordinates of the inverse element can be computed as
    $$\left( x \circ  x ^ {\langle -1 \rangle}\right)_1 = B_1( x,  x ^ {\langle -1 \rangle}) = 1.$$
    Then,
    $$\left( x ^ {\langle -1 \rangle}\right)_1 = \frac{1}{x_1}.$$
    Also,
    $$\left( x \circ  x ^ {\langle -1 \rangle}\right)_n = B_n\left( x,  x ^ {\langle -1 \rangle}\right) = 0.$$
    Further,
    $$\sum_{k = 1} ^ {n} x_k B_{n, k} \left( x ^ {\langle -1 \rangle}\right) = 0.$$
    Hence,
    $$x_1 x_n ^ {\langle -1 \rangle} + \sum_{k = 2} ^ {n} x_k B_{n, k} ( x ^ {\langle -1 \rangle}) = 0.$$
    Thus,
    \begin{equation} \label{lagrange_rec}
          x_n ^ {\langle -1 \rangle} = \frac{-1}{x_1} \sum_{k = 2} ^ {n} x_k B_{n, k} ( x ^ {\langle -1 \rangle}).
    \end{equation}

    In summary, the Bell product is associative, and there exist identity and inverse elements in the Bell group. Hence, the Bell group is a group.
\end{proof}

A formal exponential series $f(t)$ is said to be \emph{invertible} if there exists a formal exponential series $g(t)$ such that $f(g(t)) = g(f(t)) = t$. An obvious yet important remark is that from the equation (\ref{iso_comp}), it follows that Bell Groups are isomorphic to the groups of invertible formal exponential series with no constant term when equipped with functional composition.

An important tool we will use in proving the main result is the recursive Lagrange Inversion, which uses values \( x_1 ^ {\langle -1 \rangle}, x_2 ^ {\langle -1 \rangle}, \cdots, x_{n  - 1} ^ {\langle -1 \rangle} \) to compute \( x ^ {\langle -1 \rangle}_n.\)

\begin{lm}\label{lag_rec} (Recursive Lagrange Inversion) Let $x(t) = \sum_{n \geq 1} x_n\frac{t ^ n}{n!}$. Then, the coefficients of its compositional inverse $x^{\langle -1 \rangle}_n$ can be computed using the following recursive relation.
    \[
     x ^{\langle -1 \rangle}_n =
    \begin{cases}
      \dfrac{1}{x_1}, & \text{if } n = 1, \\\\
      \displaystyle \dfrac{-1}{x_1} \sum_{k = 2} ^ {n} x_k B_{n, k} (x ^ {\langle -1 \rangle}), & \text{if } n > 1.
    \end{cases}
\]
\end{lm}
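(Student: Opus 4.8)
The plan is to read the recursion directly off the group axioms established in Theorem \ref{bell_group}, since $x^{\langle -1 \rangle}$ is by definition the element of $V$ satisfying $x \circ x^{\langle -1 \rangle} = e$ with $e = (1, 0, 0, \ldots)$. Expanding the $n$-th coordinate of this identity through the definition of the Bell product gives
\[
\sum_{k = 1}^{n} x_k B_{n, k}(x^{\langle -1 \rangle}) = e_n,
\]
and the two cases of the lemma then follow by isolating the extreme terms of this sum together with two elementary evaluations of Bell polynomials.

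First I would record the boundary values $B_{n, 1}(y) = y_n$ and $B_{1, 1}(y) = y_1$. Both follow immediately from the defining sum: for $k = 1$ the only admissible sequence $\alpha$ has $\alpha_n = 1$ and all other entries $0$, and for $(n, k) = (1, 1)$ the only term is $\alpha_1 = 1$. With these in hand, the base case $n = 1$ reads $x_1 B_{1, 1}(x^{\langle -1 \rangle}) = x_1 x_1^{\langle -1 \rangle} = e_1 = 1$, giving $x_1^{\langle -1 \rangle} = 1/x_1$, which is legitimate because $x_1 \neq 0$ for every element of $V$. For $n > 1$ one has $e_n = 0$, so splitting off the $k = 1$ term yields
\[
x_1\, x_n^{\langle -1 \rangle} + \sum_{k = 2}^{n} x_k B_{n, k}(x^{\langle -1 \rangle}) = 0,
\]
which rearranges to the claimed recursion; this is precisely equation (\ref{lagrange_rec}) already derived inside the proof of Theorem \ref{bell_group}.

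The one point that genuinely needs checking — and the only step I would call an obstacle — is that the recursion is well posed, i.e. that the right-hand side of the $n > 1$ case involves only the previously computed coefficients $x_1^{\langle -1 \rangle}, \ldots, x_{n - 1}^{\langle -1 \rangle}$ and not $x_n^{\langle -1 \rangle}$ itself. This is exactly where the restriction $k \geq 2$ matters: by the definition of $B_{n, k}$, the polynomial $B_{n, k}(x^{\langle -1 \rangle})$ depends only on the arguments $x_1^{\langle -1 \rangle}, \ldots, x_{n - k + 1}^{\langle -1 \rangle}$, and for $k \geq 2$ we have $n - k + 1 \leq n - 1$. Hence every term on the right uses an index strictly smaller than $n$, so the formula determines $x_n^{\langle -1 \rangle}$ unambiguously by induction on $n$. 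Since existence and uniqueness of the inverse are already guaranteed by Theorem \ref{bell_group}, no solvability or convergence issue remains, and the recursion is merely the explicit form of that inverse.
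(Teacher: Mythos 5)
Your proof is correct and follows essentially the same route as the paper, which simply points back to the derivation of equation (\ref{lagrange_rec}) inside the proof of Theorem \ref{bell_group}: expand the $n$-th coordinate of $x \circ x^{\langle -1 \rangle} = e$ via the Bell product and isolate the $k=1$ term. Your added observation that $B_{n,k}$ for $k \geq 2$ depends only on $x_1^{\langle -1 \rangle}, \ldots, x_{n-k+1}^{\langle -1 \rangle}$ with $n - k + 1 \leq n - 1$ is a worthwhile explicit check of well-posedness that the paper leaves implicit.
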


\begin{proof}
    The proof is the same as the derivation of equation (\ref{lagrange_rec}) in the proof of Theorem \ref{bell_group}.
\end{proof}

Alternatively, it is possible to find the inverse of an element in the Bell Group using a closed-form expression (see p. 151 in \cite{ComtetBook}). 

\begin{lm}\label{comtet} Let $x(t) = \sum_{n \geq 1} x_n\frac{t ^ n}{n!}$. Then, the coefficients $x_n ^ {\langle -1 \rangle}$ of its compositional inverse $x^{\langle -1 \rangle}(t)$ can be obtained as

    \[
        x^{\langle -1 \rangle}_n =
        \begin{cases}
          \dfrac{1}{x_1}, & \text{if } n = 1, \\\\
          \displaystyle \sum_{k = 1} ^ {n - 1} \frac{(-1) ^ k}{x_1 ^ {n + k}} B_{n + k - 1, k}(0, x_2, x_3, \cdots), & \text{if } n > 1.
        \end{cases}
    \]
\end{lm}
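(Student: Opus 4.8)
The plan is to establish the closed-form expression in Lemma \ref{comtet} by combining the recursive characterization already available to us (Lemma \ref{lag_rec}) with the classical Lagrange inversion formula, specializing the reference's general statement to the normalization used here. First I would recall that for a formal exponential series $x(t) = \sum_{n \geq 1} x_n t^n/n!$ with $x_1 \neq 0$, the Lagrange inversion formula expresses the coefficients of the compositional inverse in terms of powers of the series $t/x(t)$, or equivalently in terms of Bell polynomials evaluated at the coefficients $(x_2, x_3, \ldots)$. The case $n = 1$ is immediate, since the leading coefficient of the inverse is forced to be $1/x_1$ by matching the linear terms of $x(x^{\langle -1 \rangle}(t)) = t$, exactly as derived in the proof of Theorem \ref{bell_group}.

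For $n > 1$, my approach would be to write $x(t) = x_1 t\,\bigl(1 + \phi(t)\bigr)$ where $\phi(t) = \sum_{n \geq 2} (x_n/x_1)\, t^{n-1}/n!$ carries the higher-order data, and then apply the Lagrange--Bürmann theorem in the form
\[
[t^n]\, x^{\langle -1 \rangle}(t) = \frac{1}{n}\,[t^{n-1}]\left(\frac{t}{x(t)}\right)^{n}.
\]
Converting the ordinary-coefficient extraction into the exponential normalization introduces the factorial factor $n!/((n+k-1)! \cdots)$ that ultimately reorganizes into a single Bell polynomial $B_{n+k-1, k}$. The key algebraic step is expanding the power $\bigl(1 + \phi(t)\bigr)^{-n}$ by the binomial series, which produces the alternating sign $(-1)^k$ and a binomial coefficient $\binom{n+k-1}{k}$; the factor $x_1^{-(n+k)}$ then emerges from pulling out $x_1^{-n}$ from $(t/x(t))^n$ together with the degree contributions. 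Inserting the zero first argument into $B_{n+k-1,k}(0, x_2, x_3, \ldots)$ reflects precisely that $\phi$ has no constant term (its expansion begins at the quadratic coefficient of $x$), so that only genuine higher-degree compositions contribute.

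The main obstacle I anticipate is bookkeeping the passage between the ordinary generating function conventions in which the textbook Lagrange inversion is usually stated and the exponential convention $x_n/n!$ used throughout this paper; getting the factorial weights to collapse exactly into $B_{n+k-1,k}$ with the stated shift in indices requires careful tracking of which $n!$ factors belong to the coefficient extraction and which belong to the definition of the Bell polynomial. An alternative and cleaner route, which I would prefer to present, is to simply quote the closed form from p.~151 of \cite{ComtetBook} — where it appears in essentially this form — and then verify consistency with Lemma \ref{lag_rec} by checking that both recursions produce the same values for small $n$, or by an induction argument matching the two expressions term by term. Since the statement explicitly cites Comtet, the honest and economical proof is to adapt that reference's derivation rather than reprove Lagrange inversion from scratch, reducing the work to confirming the normalization and the appearance of the leading zero argument in the Bell polynomial.
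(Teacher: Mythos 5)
Your proposal matches the paper, which in fact gives no proof of Lemma \ref{comtet} at all and simply cites p.~151 of \cite{ComtetBook} — exactly the route you end up recommending. Your auxiliary Lagrange--Bürmann sketch is also sound in outline (writing $x(t)=x_1 t(1+\phi(t))$, expanding $(1+\phi)^{-n}$ to get $(-1)^k\binom{n+k-1}{k}$, and noting that $\binom{n+k-1}{k}k!/(n+k-1)!=1/(n-1)!$ collapses the factorials), so either presentation would suffice.
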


We will utilize Lemma \ref{comtet} for the explicit computation of the coefficients of generating functions given as compositional inverses.

\section{Labeled Multipartite Series-reduced Trees}

In this section, we discuss the enumeration of rooted multipartite labeled series-reduced trees. Throughout this section, we consider trees to be \emph{non-planar}, meaning that the order of branches does not matter and swapping them results in the same tree.

A tree is said to be \emph{multipartite} if there are no two adjacent vertices that share the same color. By definition, rooted labeled multipartite series-reduced trees have uncolored labeled leaves and colored unlabeled inner vertices.

Additionally, out-degrees of inner vertices of rooted labeled multipartite series-reduced trees are not equal to $1.$ For convenience, sometimes we may call these trees \emph{labeled $m\text{-partite}$ trees}. 

We obtain and prove the generating function for labeled $m\text{-partite}$ series-reduced trees. First, let's define the weight of a labeled $m\text{-partite}$ series-reduced tree.

\begin{dfn} Let $T$ be a labeled $m\text{-partite}$ series-reduced tree with $s$ leaves, and let $V(T)$ denote its vertex set. Further, for all $v \in V(T)$, define $d(v)$ as the out-degree of $v$, and $col(v)$ as the color of the vertex $v$. The weight of a vertex $v$ is denoted as $x_{col(v), d(v)}$. Then, the weight $w(T)$ of a labeled $m\text{-partite}$ series-reduced tree $T$ is defined as a product of weights of its vertices.

\[
w(T) = \dfrac{t ^ s }{s!} \prod_{\substack{v \in V(T) \\ d(v)\neq 0}} x_{col(v), d(v)}.
\]
    
\end{dfn}

For example, consider the tree illustrated in Figure 2, and let red correspond to color $1$, blue correspond to color $2$, and green correspond to color $3$. The tree contains $10$ leaves, one red vertex with out-degree $3$, one red vertex with out-degree $2$, one blue vertex with out-degree $3$, and two green vertices with out-degree $3$. 
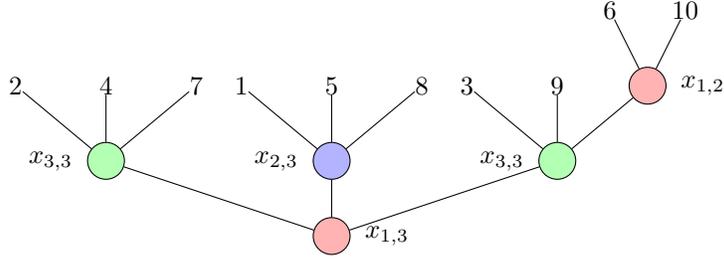
\begin{figure}[ht]
    \centering
    \begin{tikzpicture}[
        grow'=up,
        level distance=1cm,
        level 1/.style={sibling distance=3cm},
        level 2/.style={sibling distance=1.2cm},
        level 3/.style={sibling distance=1cm},
        inn/.style={circle,draw,scale=0.7,minimum size=7mm},
        rednode/.style={inn,fill=red!30},
        greennode/.style={inn,fill=green!30},
        bluenode/.style={inn,fill=blue!30},
        node_invisible/.style={inner sep=0pt,minimum size=0pt}
    ]
    \node[rednode, label=right:{$x_{1,3}$}] {}
        child { node[greennode, label=left:{$x_{3,3}$}] {}
            child { node[node_invisible] {2} }
            child { node[node_invisible] {4} }
            child { node[node_invisible] {7} }
        }
        child { node[bluenode, label=left:{$x_{2,3}$}] {}
            child { node[node_invisible] {1} }
            child { node[node_invisible] {5} }
            child { node[node_invisible] {8} }
        }
        child { node[greennode, label=left:{$x_{3,3}$}] {}
            child { node[node_invisible] {3} }
            child { node[node_invisible] {9} }
            child { node[rednode, label=right:{$x_{1,2}$}] {}
                child { node[node_invisible] {6} }
                child { node[node_invisible] {10} }
            }
        };
    \end{tikzpicture}
    \caption{A labeled $3$-partite series-reduced tree with inner vertex weights.}
\end{figure}

Thus, the weight of the tree is given as

\[
w(T) = \dfrac{t ^ {10}}{10!} x_{1, 3} x_{1, 2} x_{2, 3} x_{3, 3} ^ 2.
\]

\begin{dfn} Let \(P(m ,t, x) = P(m, t, x_{1, 1}, x_{1, 2}, \cdots x_{2, 1}, x_{2, 2}, \cdots x_{m, 1}, x_{m, 2}, \cdots)\) be the generating function for labeled $m\text{-partite}$ series-reduced trees. We define it in terms of the weights of trees as

    \[
    P(m, t,  x) = \sum_{T} w(T),
    \]
where the sum is over all labeled $m\text{-partite}$ series-reduced trees $T$.
\end{dfn}

\begin{proof}[\bf Proof of Theorem \ref{main_result}.]
    Let $P((m, c), t,  x)$ denote the generating function for labeled $m\text{-partite}$ trees with roots colored in color $c$. Similarly, let $P_s((m, c), x) = P_s((m, c), x_1, x_2, x_3, \cdots)$ denote the generating function for trees with $s$ leaves and roots colored in color $c$. In other words,

    \[
    P((m, c), t,  x) = \sum_{s \geq 1} \dfrac{t ^ s}{s!} P_s((m, c), x).
    \]

    If $P_ s(m, x_1, x_2, x_3, \cdots) = \left[ \dfrac{t ^s }{s!} \right] P(m, t,  x)$ is the generating function for labeled $m\text{-partite}$ trees with $s$ leaves and roots of any color, it's obvious that $\sum_{k = 1} ^ c P_s ((m, c), x) = P_ s (m,  x)$. 
    
    We can now express $P_s ((m, c), x)$ as a recurrence relationship. First, we enumerate forests of $m\text{-partite}$ trees with \( s \) leaves, where no tree has a root of color \( c \). Then, we will attach the roots of trees in the forest to a common root. The common root has color $c$, and its out-degree $k$ equals the number of trees in the forest. Thus, we obtain an $m\text{-partite}$ tree with $s$ leaves and root color $c$.

    First, the number of forests consisting of \( k \) trees with a total of \( s \) leaves, where none of the tree roots are colored with color \( c \), can be computed using the Bell polynomial.

    \[
    B_{s,k} \left( \left\{ P_j(m, x) - P_j((m, c), x) \right\}_{j=1}^{s - k + 1} \right).
    \]

    Then, attaching the trees in a forest to a common root colored \( c \) produces an $m\text{-partite}$ tree, from which

    \[
    P_s ((m, c), x) = \sum_{k = 2} ^ s x_{c, k} B_{s,k} \left( \left\{ P_j(m, x) - P_j((m, c), x) \right\}_{j=1}^{s - k + 1} \right).
    \]
    We can rewrite the left-hand side in terms of the Bell product.
    \[ \sum_{k = 2} ^ s x_{c, k}B_{s,k} \left( \left\{ P_j(m, x) - P_j((m, c), x) \right\}_{j=1}^{s - k + 1} \right) \]
    \[ = B_s \left(x_c, \left\{ P_j(m, x) - P_j((m, c), x) \right\}_{j=1}^{s} \right) - P_s(m, x) + P_s((m, c), x)
    \]
    \[
    = B_s(x_c, P(m, x) - P((m, c), x)) - P_s(m, x) + P_s((m, c), x).
    \]
    Hence, 
    \[
    P_s(m, x) = B_s(x_c,  P_m( x) - P_{m, c}( x)).
    \]
    Further,
    \[
    P(m, t,  x) = x_c(t) \circ (P(m, t,  x) - P((m, c),t,  x)).
    \]
    The composition of functions satisfies the left distributive law, so
    \[
    x_c ^ {\langle -1 \rangle}(t) \circ P(m, t,  x) = (P(m, t,  x) - P((m, c), t,  x)).
    \]
    We sum over all $1 \leq c \leq m$, and get the following expression
    \[
    \left( \sum_{c = 1} ^ m x_c ^ {\langle -1 \rangle}(t) \right) \circ P(m, t,  x) = (m - 1)P(m, t,  x).
    \]
    Using the Bell product, we can expand as
    \[
    P_s(m, x) m + \sum_{k = 2} ^ s B_{s, k} (P(m, x)) \sum_{c = 1} ^ {m} \left(  x_{c} ^ {\langle -1 \rangle} \right)_k = (m - 1) P_s(m, x).
    \]
    Then,
    \begin{equation}\label{wow}
    -\sum_{k = 2} ^ s B_{s, k} (P(m, x)) \sum_{c = 1} ^ {m} \left(  x_{c} ^ {\langle -1 \rangle} \right)_k = P_s(m, x).
    \end{equation}
    Notice that equation (\ref{wow})is the recursive Lagrange Inversion in Lemma \ref{lag_rec} when the coefficient of $t$ in the function equals $1$. Thus,
    \[
    P(m, t,  x) = \left(t + \sum_{c = 1} ^ m (x_{c} ^ {\langle -1 \rangle}(t) - t)\right) ^ {\langle -1 \rangle}
    \]
    which proves the theorem.

\end{proof}

\noindent\textbf{Example.} First 5 terms of $P(m, t, {x})$ for $m = 1, 2, 3$.
\[
\resizebox{\textwidth}{!}{$%
\setlength{\arraycolsep}{0pt}
\renewcommand{\arraystretch}{1.2}
\begin{array}{@{}r@{\;\;=\;\;}l@{}}
P(1, t, x) &
  \displaystyle
  t + \frac{t^{2}}{2!}\,x_{1,2}
  + \frac{t^{3}}{3!}\,x_{1,3}
  + \frac{t^{4}}{4!}\,x_{1,4}
  + \frac{t^{5}}{5!}\,x_{1,5} 
  + \cdots ,
\\[10pt]
P(2, t, x) &
  \begin{aligned}[t]
     & t + \frac{t^{2}}{2!}\bigl(x_{1,2}+x_{2,2}\bigr) \\[4pt]
     &+ \frac{t^{3}}{3!}\bigl(
          x_{1,3}+x_{2,3}+6x_{1,2}x_{2,2}
        \bigr) \\[4pt]
     &+ \frac{t^{4}}{4!}\Bigl(
          x_{1,4}+x_{2,4}
          +10\bigl(x_{1,3}x_{2,2}+x_{2,3}x_{1,2}\bigr)
          +15\bigl(x_{1,2}^{2}x_{2,2}+x_{1,2}x_{2,2}^{2}\bigr)
        \Bigr) \\[4pt]
     &+ \frac{t^{5}}{5!}\Bigl(
          x_{1,5}+x_{2,5}
          +20\,x_{1,3}x_{2,3} \\[2pt]
    &\quad
          +15\bigl(
              x_{1,4}x_{2,2}+x_{2,4}x_{1,2}
              +x_{2,2}x_{1,2}^{3}+x_{2,2}^{3}x_{1,2}
            \bigr) \\[2pt]
    &\quad
          +45\bigl(x_{2,3}x_{1,2}^{2}+x_{1,3}x_{2,2}^{2}\bigr)
          +60\bigl(x_{1,3}x_{2,2}x_{1,2}+x_{2,2}x_{2,3}x_{1,2}\bigr) \\[2pt]
    &\quad
          +180\,x_{2,2}^{2}x_{1,2}^{2}
        \Bigr) + \cdots ,
  \end{aligned}
\\[10pt]
P(3,t, x) &
  \begin{aligned}[t]
     & t + \frac{t^{2}}{2!}\bigl(x_{1,2}+x_{2,2}+x_{3,2}\bigr) \\[4pt]
     &+ \frac{t^{3}}{3!}\bigl(
          x_{1,3}+x_{2,3}+x_{3,3}
          + 6\!\bigl(x_{1,2}x_{2,2}+x_{1,2}x_{3,2}+x_{2,2}x_{3,2}\bigr)
        \bigr) \\[4pt]
     &+ \frac{t^{4}}{4!}\Bigl(
          15\bigl(
          x_{1,2}^2 x_{2,2}
      + x_{1,2} x_{2,2}^2
      + x_{1,2}^2 x_{3,2}
      + x_{1,2} x_{3,2}^2
      + x_{2,2}^2 x_{3,2}
      + x_{2,2} x_{3,2}^2
          \bigr) \\[2pt]
     &\quad
          + 10\bigl(
              x_{2,3}x_{1,2}+x_{3,3}x_{1,2}
            + x_{1,3}x_{2,2}+x_{1,3}x_{3,2}
            + x_{2,3}x_{3,2}+x_{2,2}x_{3,3}
          \bigr) \\[2pt]
     &\quad
          + 90\,x_{1,2}x_{2,2}x_{3,2}
          + x_{1,4}+x_{2,4}+x_{3,4}
        \Bigr) + \cdots
  \end{aligned}
\end{array}
$}
\]

The meaning of $\left[ \frac{t ^ s}{s!}\right] P(m, t,  x)$ is the sum of weights of all rooted $m\text{-partite}$ trees labeled series-reduced trees with $s$ leaves. For example,$\left[ \frac{t ^ 3}{3!}\right] P_2(t,  x) = x_{1, 3} + x_{2, 3} + 6 x_{1, 2} x_{2, 2}$ is the sum of weights the trees below.
\begin{figure}[htb]
  \centering
  \renewcommand{\arraystretch}{0.9} 
  \begin{adjustbox}{max width=\textwidth,max height=0.45\textheight,keepaspectratio}
    \begin{tabular}{@{}cccc@{}}
      \begin{tikzpicture}[scale=0.55, grow'=up, level 1/.style={sibling distance=1.3cm}]
        \node[node_black, scale=3.0] (x23) {}
          child {node[node_invisible]{1}}
          child {node[node_invisible]{2}}
          child {node[node_invisible]{3}};
        \node[scale=0.9] at ($(x23)+(2.4em, 0em)$) {$x_{2,3}$};
      \end{tikzpicture}
      &
      \begin{tikzpicture}[scale=0.55, grow'=up, level 1/.style={sibling distance=1.5cm}, level 2/.style={sibling distance=1.1cm}]
        \node[node_black, scale=3.0] (x22) {}
          child {node[node_invisible]{1}}
          child {node[node_white, scale=3.0] (x12){}
            child {node[node_invisible]{2}}
            child {node[node_invisible]{3}}};
        \node[scale=0.9] at ($(x22)+(2.4em, 0em)$) {$x_{2,2}$};
        \node[scale=0.9] at ($(x12)+(2.4em, 0em)$) {$x_{1,2}$};
      \end{tikzpicture}
      &
      \begin{tikzpicture}[scale=0.55, grow'=up, level 1/.style={sibling distance=1.5cm}, level 2/.style={sibling distance=1.1cm}]
        \node[node_black, scale=3.0] (x22){}
          child {node[node_invisible]{2}}
          child {node[node_white, scale=3.0] (x12){}
            child {node[node_invisible]{1}}
            child {node[node_invisible]{3}}};
        \node[scale=0.9] at ($(x22)+(2.4em, 0em)$) {$x_{2,2}$};
        \node[scale=0.9] at ($(x12)+(2.4em, 0em)$) {$x_{1,2}$};
      \end{tikzpicture}
      &
      \begin{tikzpicture}[scale=0.55, grow'=up, level 1/.style={sibling distance=1.5cm}, level 2/.style={sibling distance=1.1cm}]
        \node[node_black, scale=3.0] (x22){}
          child {node[node_invisible]{3}}
          child {node[node_white, scale=3.0](x12){}
            child {node[node_invisible]{1}}
            child {node[node_invisible]{2}}};
        \node[scale=0.9] at ($(x22)+(2.4em, 0em)$) {$x_{2,2}$};
        \node[scale=0.9] at ($(x12)+(2.4em, 0em)$) {$x_{1,2}$};
      \end{tikzpicture}
      \\[0.25cm]
      \begin{tikzpicture}[scale=0.55, grow'=up, level 1/.style={sibling distance=1.3cm}]
        \node[node_white, scale=3.0] (x13) {}
          child {node[node_invisible]{1}}
          child {node[node_invisible]{2}}
          child {node[node_invisible]{3}};
        \node[scale=0.9] at ($(x13)+(2.4em, 0em)$) {$x_{1,3}$};
      \end{tikzpicture}
      &
      \begin{tikzpicture}[scale=0.55, grow'=up, level 1/.style={sibling distance=1.5cm}, level 2/.style={sibling distance=1.1cm}]
        \node[node_white, scale=3.0] (x12) {}
          child {node[node_invisible]{1}}
          child {node[node_black, scale=3.0] (x22) {}
            child {node[node_invisible]{2}}
            child {node[node_invisible]{3}}};
        \node[scale=0.9] at ($(x22)+(2.4em, 0em)$) {$x_{2,2}$};
        \node[scale=0.9] at ($(x12)+(2.4em, 0em)$) {$x_{1,2}$};
      \end{tikzpicture}
      &
      \begin{tikzpicture}[scale=0.55, grow'=up, level 1/.style={sibling distance=1.5cm}, level 2/.style={sibling distance=1.1cm}]
        \node[node_white, scale=3.0] (x12) {}
          child {node[node_invisible]{2}}
          child {node[node_black, scale=3.0] (x22) {}
            child {node[node_invisible]{1}}
            child {node[node_invisible]{3}}};
        \node[scale=0.9] at ($(x22)+(2.4em, 0em)$) {$x_{2,2}$};
        \node[scale=0.9] at ($(x12)+(2.4em, 0em)$) {$x_{1,2}$};
      \end{tikzpicture}
      &
      \begin{tikzpicture}[scale=0.55, grow'=up, level 1/.style={sibling distance=1.5cm}, level 2/.style={sibling distance=1.1cm}]
        \node[node_white, scale=3.0] (x12) {}
          child {node[node_invisible]{3}}
          child {node[node_black, scale=3.0] (x22) {}
            child {node[node_invisible]{1}}
            child {node[node_invisible]{2}}};
        \node[scale=0.9] at ($(x22)+(2.4em, 0em)$) {$x_{2,2}$};
        \node[scale=0.9] at ($(x12)+(2.4em, 0em)$) {$x_{1,2}$};
      \end{tikzpicture}
    \end{tabular}
  \end{adjustbox}
  \caption{Rooted bipartite labeled series-reduced trees with $3$ leaves.}
  \label{fig:series_reduced_3leaves}
\end{figure}
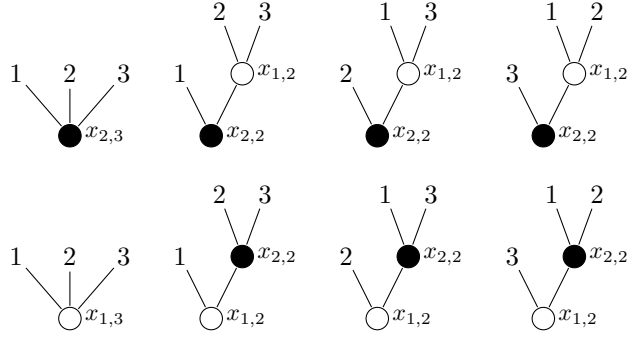

\newpage

The generating function for labeled $m$-partite series-reduced trees $P_s(m, x)$ is precisely the coefficient $\left[ \frac{t ^ s}{s!}\right] P(m, t,  x)$. We can use the Lemma \ref{comtet} to obtain a closed-form expression for the coefficients of $P(m, t,  x)$.

\begin{thm}\label{closed_form} The closed form expression of the generating function for labeled $m\text{-partite}$ series-reduced trees with $s$ leaves $P_s(m, x)$ is can be given by

    \[
    P_s(m, x) = \sum_{k = 0} ^ {s} (-1) ^ {k} B_{s + k - 1, k} \left(0, \sum_{c = 1} ^ m x_{c, 2} ^{\langle -1 \rangle}, \sum_{c = 1} ^ m x_{c, 3} ^{\langle -1 \rangle}, \cdots, \sum_{c = 1} ^ m x_{c, s} ^{\langle -1 \rangle} \right),
    \]
where $ x_{c, j} ^ {\langle -1 \rangle}$ is the coefficient of the series expansion of the compositional inverse of the degree function $x_c^{\langle -1 \rangle}(t)$, which can be expressed explicitly by

\[
 x_{c, j} ^ {\langle -1 \rangle} =  \sum_{k = 0} ^ {j} (-1) ^ {k} B_{j + k - 1, k}(0, x_{c, 2}, x_{c, 3}, \ldots, x_{c, j}).
\]
    
\end{thm}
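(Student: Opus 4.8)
The plan is to read off the exponential coefficients of the function being inverted in Theorem~\ref{main_result} and then invoke the closed-form inversion of Lemma~\ref{comtet}. Write $g(t) = t + \sum_{c=1}^m\bigl(x_c^{\langle -1\rangle}(t) - t\bigr)$, so that Theorem~\ref{main_result} asserts $P(m,t,x) = g^{\langle -1\rangle}(t)$. Expanding each difference as $x_c^{\langle -1\rangle}(t) - t = \sum_{n\geq 2}\frac{t^n}{n!}\,x_{c,n}^{\langle -1\rangle}$ and summing over $c$ shows that $g(t)$ has coefficients $g_1 = 1$ and $g_n = \sum_{c=1}^m x_{c,n}^{\langle -1\rangle}$ for $n\geq 2$. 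The crucial feature is that $g_1 = 1$: in the formula of Lemma~\ref{comtet} every factor $1/g_1^{\,s+k}$ collapses to $1$, leaving
\[
P_s(m,x) = g_s^{\langle -1\rangle} = \sum_{k=1}^{s-1} (-1)^k\, B_{s+k-1,\,k}\!\left(0, g_2, g_3, \ldots\right)
\]
for $s>1$, together with $P_1(m,x)=1$.

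Next I would reconcile the range $1\le k\le s-1$ produced by Lemma~\ref{comtet} with the range $0\le k\le s$ appearing in the statement, by checking that the two extra boundary terms either vanish or absorb the $s=1$ case. For $k=0$ the summand is $B_{s-1,0}(0,g_2,\dots)$, which equals $1$ when $s=1$ and $0$ when $s>1$ under the standard convention $B_{n,0}=\delta_{n,0}$; this term therefore supplies exactly the value $P_1(m,x)=1$ and contributes nothing otherwise. For $k=s$ the summand is $B_{2s-1,\,s}(0,g_2,\dots,g_s)$, and since the first slot is $0$ one may use only blocks of size at least $2$; partitioning $2s-1$ into $s$ such blocks is impossible because their total would be at least $2s>2s-1$, so this term is identically $0$. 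Hence extending the sum to $\sum_{k=0}^{s}$ changes nothing, and substituting $g_n = \sum_{c=1}^m x_{c,n}^{\langle -1\rangle}$ yields the displayed expression for $P_s(m,x)$.

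Finally, the inner formula is nothing but Lemma~\ref{comtet} applied to the single degree function $x_c(t) = t + \sum_{n\geq 2} x_{c,n}\frac{t^n}{n!}$. Because $x_{c,1}=1$, the same cancellation of the $1/x_1$ factors occurs, and the identical boundary-term analysis ($k=0$ gives $B_{j-1,0}=\delta_{j,1}$ and $k=j$ gives $B_{2j-1,j}(0,\dots)=0$) shows that the natural range $1\le k\le j-1$ may be replaced by $0\le k\le j$ without altering the value. This gives $x_{c,j}^{\langle -1\rangle} = \sum_{k=0}^{j}(-1)^k B_{j+k-1,k}(0,x_{c,2},\dots,x_{c,j})$, completing the proof.

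The only genuine obstacle is the bookkeeping of the summation limits: one must verify that the $k=0$ and $k=s$ (respectively $k=j$) terms are harmless, which rests on the convention $B_{n,0}=\delta_{n,0}$ and the elementary observation that $2s-1$ admits no partition into $s$ parts each of size at least $2$. Everything else is a direct substitution into the inversion formulas already established in Lemma~\ref{comtet} and Theorem~\ref{main_result}.
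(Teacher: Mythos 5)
Your proposal is correct and follows essentially the same route as the paper: substitute the inverted series from Theorem~\ref{main_result} into Lemma~\ref{comtet}, use the fact that the linear coefficient equals $1$ to collapse the $1/x_1^{\,n+k}$ factors, and then adjust the summation limits. You are in fact slightly more careful than the paper, which extends the upper limit to $k=s$ without comment; your observation that $B_{2s-1,\,s}(0,\dots)$ vanishes because $2s-1$ has no partition into $s$ parts of size at least $2$ supplies that justification explicitly.
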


\begin{proof}
    By Theorem \ref{main_result}, we know that

    \[
    P(m, t,  x) = \left(t + \sum_{c = 1} ^ {m} \left(x_c ^ {\langle -1 \rangle} (t) - t\right)  \right) ^ {\langle -1 \rangle}.
    \]

    To compute coefficients of \( \dfrac{t ^ s}{s!} \) of $P(m, t,  x)$, we use Lemma \ref{comtet} (closed-form Lagrange Inversion formula). It is evident that

    \[
    \left[t \right] P(m, t,  x) = \dfrac{1}{(1 - m) + \sum_{c = 1} ^ m \left(  x_{c} ^ {\langle -1 \rangle} \right)_1} = \dfrac{1}{(1 - m) + m} = 1 .
    \]

    Therefore, we can assume that  $s > 1$. Then,

    \[
    P_s(m, x) = \left[ \dfrac{t ^s }{s!}\right] P(m, t,  x) = \sum_{k = 1} ^ {s} (-1) ^ {k} B_{s + k - 1, k} \left( 0, \sum_{c = 1} ^ m x_{c, 2} ^{\langle -1 \rangle}, \sum_{c = 1} ^ m x_{c, 3} ^{\langle -1 \rangle}, \cdots \right).
    \]

    By definition of Bell Polynomials, $B_{0, 0}(x) = 1$ and $B_{n, 0}(x) = 0$ for $n > 0$. This is why, we can let $k$ in the sum start from $k = 0$ to account for the case when $s = 1.$

    \[
    P_s(m, x) = \sum_{k = 0} ^ {s} (-1) ^ {k} B_{s + k - 1, k} \left(0, \sum_{c = 1} ^ m x_{c, 2} ^{\langle -1 \rangle}, \sum_{c = 1} ^ m x_{c, 3} ^{\langle -1 \rangle}, \cdots, \sum_{c = 1} ^ m x_{c, s} ^{\langle -1 \rangle} \right).
    \]

    Similarly, it is possible to apply Lemma \ref{comtet} to find an explicit form of $  x_{c, i} ^ {\langle -1 \rangle}$. In particular,

    \begin{dmath*}
        x_{c, j} ^ {\langle -1 \rangle} = \left[ \dfrac{t ^ j}{j!}\right] x_c ^ {\langle -1 \rangle}(t) = \left[ \dfrac{t ^ j}{j!}\right] \left( t + \sum_{n \geq 2} x_{c, n} \dfrac{t ^ n}{n!}\right) ^ {\langle -1 \rangle} = \sum_{k = 0} ^ {j} (-1) ^ {j + k - 1} B_{j + k - 1, k}(0, x_{c, 2}, x_{c, 3}, \ldots, x_{c, j}).
    \end{dmath*}
\end{proof}

\section{Applications}

In this section, we will use Theorem \ref{main_result} and Theorem \ref{closed_form} to obtain several applications and corollaries such as the closed-form enumeration of labeled multipartite trees, labeled multipartite circular trees, and symbolic ultrametrics. We also obtain the computation of the number of increasingly labeled processes.

\subsection{Enumeration of labeled multipartite trees and Symbolic Ultrametrics}

An \textit{ultrametric} defined on a set $X$ is a map $d : X \times X \rightarrow \mathbb{R}$ satisfies the following conditions for all $x, y, z \in X.$

\begin{enumerate}
    \item $d(x, y) = 0$ if and only if $x = y,$
    \item $d(x, y) = d(y, x),$
    \item $d(x, z) \leq \max(d(x, y), d(y, z)).$
\end{enumerate}

A \textit{symbolic ultrametric}, introduced by Böcker and Dress \cite{Dress}, is defined as follows. For sets $X$ and $M$, a symbolic ultrametric is a function $D : X \times X \rightarrow M$ such that for all $x, y, z \in X$ the following conditions hold.

\begin{enumerate}
    \item $D(x, y) = D(y, x),$
    \item $\# \{D(x, y), D(x, z), D(y, z)\} \leq 2,$
    \item $\not\exists a, b, c, d \in X$ such that $D(a, b) = D(b, c) = D(c,d) \neq D(b, d) = D(d, a) = D(a, c).$
\end{enumerate}

In \cite{Dress}, Theorem 2 states that there exists a bijection between $m\text{-partite}$ labeled series-reduced trees on $s$ leaves and symbolic ultrametrics $D : X \times X \rightarrow M$ such that $|X| = s$ and $|M| = m$. A natural question is to enumerate such objects.

We find the explicit enumeration of $m$-partite labeled series-reduced trees and symbolic ultrametrics.

\begin{dfn}
    A derangement of length $n$ is a permutation $\sigma$ that has no fixed points. Formally, $\sigma$ is a derangement if there exists no $1 \leq i \leq n$ such that $\sigma_i = i$. A derangement $\sigma$ is said to have $k$ cycles if its cycle decomposition has $k$ cycles.
\end{dfn}

Below, we count the number of derangements of length $n$ and $k$ cycles.

\begin{lm}\label{derangements}
    Let $!n_k$ denote the number of derangements of length $n$ and $k$ cycles. Then, 

    \[
    !n_k = B_{n, k} (0, \{ (i - 1)!\}_{i = 2} ^ \infty).
    \]
\end{lm}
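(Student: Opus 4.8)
The plan is to use the standard set-partition interpretation of the partial Bell polynomial and match it term-by-term with the cycle structure of a derangement. Unwinding the definition of $B_{n,k}$ given in the excerpt, the sequence $\alpha$ records how many blocks of each size occur and the multinomial coefficient counts the ways to distribute the labels $\{1, \ldots, n\}$ into those blocks, so that
\[
B_{n,k}(x_1, x_2, \ldots) = \sum_{\pi} \prod_{B \in \pi} x_{|B|},
\]
where the sum ranges over all set partitions $\pi$ of $\{1, \ldots, n\}$ into exactly $k$ blocks. I would first record this reformulation.

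Next I would read off the meaning of the right-hand side $B_{n,k}(0, \{(i-1)!\}_{i=2}^\infty)$ through this lens. Substituting $x_1 = 0$ annihilates every partition containing a singleton block, so only partitions all of whose blocks have size at least $2$ survive; substituting $x_i = (i-1)!$ for $i \geq 2$ weights each block $B$ by $(|B|-1)!$. The one combinatorial fact that matters is that $(|B|-1)!$ is exactly the number of distinct cyclic orderings (i.e.\ cycles) on the $|B|$ labeled elements of $B$.

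I would then exhibit the bijection making this count the derangement count. A derangement of $\{1, \ldots, n\}$ with $k$ cycles is precisely the data of a partition of $\{1, \ldots, n\}$ into $k$ blocks, each of size at least $2$ (no fixed points), together with a choice of cyclic ordering on each block. Summing $\prod_{B \in \pi}(|B|-1)!$ over all such partitions therefore counts exactly these derangements, and by the reformulation above that sum equals $B_{n,k}(0, \{(i-1)!\}_{i=2}^\infty)$, giving $!n_k = B_{n,k}(0, \{(i-1)!\}_{i=2}^\infty)$. As a cross-check I would note the exponential generating function route: since $\sum_{n} B_{n,k}(x)\,\tfrac{t^n}{n!} = \tfrac{1}{k!}\bigl(\sum_{j \geq 1} x_j \tfrac{t^j}{j!}\bigr)^k$, the stated arguments give $\sum_{j \geq 2}(j-1)!\,\tfrac{t^j}{j!} = \sum_{j \geq 2}\tfrac{t^j}{j} = -\log(1-t) - t$, which is the generating function for a single cycle of length at least $2$; raising it to the $k$th power and dividing by $k!$ reproduces the generating function for derangements with $k$ cycles.

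The main obstacle is not deep: it lies entirely in stating the set-partition (or generating-function) form of $B_{n,k}$ cleanly and in verifying carefully that $(i-1)!$ is the number of cycles on $i$ labeled points while $x_1 = 0$ enforces the no-fixed-point condition. Once those two identifications are in place, the identity is pure bookkeeping.
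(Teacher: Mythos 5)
Your proposal is correct and follows essentially the same route as the paper: interpret $B_{n,k}$ as a sum over set partitions into $k$ blocks, note that $(i-1)!$ counts cycles on $i$ labeled points, and observe that setting $x_1=0$ kills singleton blocks and hence fixed points. You simply spell out the bijection and the generating-function cross-check in more detail than the paper does.
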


\begin{proof}
    By definition, Bell Polynomial $B_{n, k}( x)$ is the generating function for labeled partitions of a segment of length $n$ to $k$ parts. It is evident that $(i - 1)!$ is the number of distinct cycles of length $i$. Thus, substituting $x_i = (i - 1)!$ into $B_{n, k}( x)$ gives the number of permutations with $k$ cycles. However, by the definition of derangements, there are no fixed points (cycles of length $1$) allowed in the cycle decomposition. So, if we set $x_1 = 0$ and $x_i = (i - 1)!$, the number of derangements with $k$ cycles is $B_{n, k}( x)$.
\end{proof}

\begin{rmk}
    The number $!n_k$ is also known as the associated Stirling numbers of the first kind (see p. 75 of \cite{RiordanBook}).
\end{rmk}

\begin{proof}[\bf Proof of Corollary \ref{main_crl}]
    Set $x_{c, k} = 1$ for all $1 \leq c \leq m$ and $k \geq 1$. Then, the degree functions and their inverses are

    \[
    x_c(t) = \sum_{n \geq 1} \dfrac{t ^ n}{n!} = e^{t} - 1,
    \]
    \[
    x_c^{\langle -1 \rangle}(t) = \left( e^{t} - 1\right) ^ {\langle -1 \rangle} = \log(1 + t).
    \]
    Thus, by Theorem \ref{main_result}, we obtain
    \[
    A(m, t) = (t(1 - m) + m \log(1 + t)) ^ {\langle -1 \rangle}.
    \]
    Expand $x_c^{\langle -1 \rangle}(t) = \log(1 + t)$.
    \[
    x_c^{\langle -1 \rangle}(t) = \log(1 + t) = \sum_{n \geq 1} (-1) ^ {n - 1} (n - 1)! \dfrac{t ^ n}{n!}.
    \]
    Thus, we obtain that $x_{c, i} ^ {\langle -1 \rangle} = (-1) ^ {i - 1}(i - 1)!$. We can substitute these values into Theorem \ref{closed_form} and obtain the following relation
    \begin{dmath*}
        a_s(m) = \sum_{k = 0} ^ s (-1) ^ {k} B_{s + k - 1, k} (0, -m, 2m, \cdots, (-1) ^ {s - 1} (s - 1)! m) = (-1) ^ {s - 1} \sum_{k = 0} ^ s (-m) ^ k  B_{s + k - 1, k} (0, -m, 2m, \cdots, (-1) ^ {s - 1} (s - 1)! m).
    \end{dmath*}
    Applying Lemma \ref{derangements}, we get the explicit formula for the number of labeled $m$-partite series-reduced trees.
    \[
    a_s(m) = (-1) ^ {s - 1} \sum_{k = 0} ^ s (-m) ^ k  (!(s + k - 1)_k).
    \]
\end{proof}

\noindent When $m = 2,$ our sequence is the number of series-parallel networks with labeled edges \cite{RiordanSchroeder}.

\begin{crl} The number of bipartite labeled series-reduced trees can be given by the following relation

\[
a_s(2) = (-1) ^ {s - 1} \sum_{k = 0} ^ s (-2) ^ k (!(s + k - 1)_k).
\]

\end{crl}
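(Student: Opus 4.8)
The plan is to derive this formula as the immediate specialization $m = 2$ of Corollary \ref{main_crl}. That corollary already establishes, for every finite number of colors $m$, the closed form
\[
a_s(m) = (-1)^{s-1} \sum_{k=0}^{s} (-m)^k\,(!(s+k-1)_k),
\]
so the claim for bipartite trees follows by substituting $m = 2$ into both the power $(-m)^k = (-2)^k$ and leaving the sign prefactor $(-1)^{s-1}$ untouched. First I would simply invoke Corollary \ref{main_crl} and set $m=2$, which reproduces the stated identity verbatim with no additional analytic work. The genuinely hard part---constructing the generating function $P(m,t,x)$ through the Bell-group Lagrange inversion of Theorem \ref{main_result} and extracting its coefficients via Lemma \ref{comtet} in Corollary \ref{main_crl}---has already been carried out, so the present statement is a corollary in the strict sense.

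To make the statement self-contained I would recall the combinatorial identification signalled in the surrounding text: a $2$-partite labeled series-reduced tree corresponds to a series-parallel network with labeled edges, the two colors of inner vertices recording the alternation between series and parallel composition along root-to-leaf paths, while the labeled leaves become the labeled edges of the network. Under this correspondence $a_s(2)$ counts exactly the series-parallel networks on $s$ labeled edges, which ties the formula to the Riordan--Schroeder enumeration \cite{RiordanSchroeder} and provides an independent meaning for the left-hand side.

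As a verification step, and because the derangement-cycle numbers $!(s+k-1)_k$ are already pinned down by Lemma \ref{derangements}, I would evaluate the first several cases by substituting $m=2$ into the polynomials listed after Corollary \ref{main_crl}, obtaining $a_1(2)=1$, $a_2(2)=2$, $a_3(2)=8$, $a_4(2)=52$, $a_5(2)=472$, $a_6(2)=5504$, and checking these against the known counts of labeled series-parallel networks. There is essentially no obstacle to overcome here; the only point requiring care is performing the substitution $m=2$ consistently in the two places where $m$ occurs, which is routine arithmetic.
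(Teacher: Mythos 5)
Your proposal is correct and matches the paper exactly: the paper treats this as an immediate specialization of Corollary \ref{main_crl} at $m=2$, offering no separate proof beyond the remark identifying $a_s(2)$ with the count of series-parallel networks with labeled edges. Your added numerical checks and the series-parallel interpretation are consistent with the paper's Table \ref{tab:symb_values} and surrounding discussion.
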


If $\mathcal A(m, t) = \sum_{s \geq 0} a_s(m) \frac{t ^ s}{s!} = 1 + A(m, t)$ is the generating function for the number of $m\text{-partite}$ labeled series-reduced trees including an empty tree (a tree with no vertices), then we show that it satisfies the following relation.

\begin{prp} The generating function $\mathcal A(m, t)$ for the number of $m\text{-partite}$ labeled series-reduced trees with an empty tree satisfies the following relation

\[
\mathcal A(m, t) = 1 + (\mathcal A(m, t)) ^ m \int_{0} ^ {t} \left(\dfrac{1}{\mathcal A(m, t)}\right) ^ m dt.
\]
    
\end{prp}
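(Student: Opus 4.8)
The plan is to convert the defining compositional inverse into a first-order differential equation and then recognize the integral identity as an exact antiderivative. By Corollary \ref{main_crl}, the series $A(m,t)$ is the compositional inverse of $\phi(u) = (1-m)u + m\log(1+u)$, so we have the functional identity $\phi(A(m,t)) = t$ as formal power series. Differentiating with respect to $t$ gives $\phi'(A)\cdot A' = 1$. Since $\phi'(u) = (1-m) + m/(1+u)$ and $\mathcal A = 1 + A$ (so that $1 + A = \mathcal A$ and $\mathcal A' = A'$), I compute $\phi'(A) = \bigl((1-m)\mathcal A + m\bigr)/\mathcal A$, which yields the ODE
\[
\mathcal A' = \dfrac{\mathcal A}{(1-m)\mathcal A + m}, \qquad \text{equivalently} \qquad \mathcal A'\bigl((1-m)\mathcal A + m\bigr) = \mathcal A.
\]
All of these manipulations are legitimate in the ring of formal power series because $\mathcal A(0) = 1 + A(0) = 1 \neq 0$, so $1/\mathcal A$, $m/(1+A)$, and $\mathcal A^{-m}$ are all well-defined.

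Next I would identify the correct auxiliary quantity, namely $g(t) = (\mathcal A - 1)\,\mathcal A^{-m}$, which is again a well-defined formal power series since $\mathcal A$ has invertible constant term. Applying the quotient (product) rule and simplifying gives
\[
g' = \mathcal A'\,\mathcal A^{-m-1}\bigl[\mathcal A - m(\mathcal A - 1)\bigr] = \mathcal A'\,\mathcal A^{-m-1}\bigl[(1-m)\mathcal A + m\bigr].
\]
Substituting the ODE in the form $\mathcal A'\bigl((1-m)\mathcal A + m\bigr) = \mathcal A$ collapses this to $g' = \mathcal A\cdot\mathcal A^{-m-1} = \mathcal A^{-m}$.

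Finally I would integrate from $0$ to $t$. Because $\mathcal A(0) = 1$, the boundary term $g(0) = (\mathcal A(0) - 1)\mathcal A(0)^{-m} = 0$ vanishes, so $g(t) = \int_0^t \mathcal A^{-m}\,dt$, that is, $(\mathcal A - 1)\mathcal A^{-m} = \int_0^t \mathcal A^{-m}\,dt$. Multiplying through by $\mathcal A^m$ gives $\mathcal A - 1 = \mathcal A^m \int_0^t \mathcal A^{-m}\,dt$, which rearranges to the claimed identity.

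I expect the only delicate point to be deriving the ODE correctly: the algebraic simplification of $\phi'(A)$ and careful bookkeeping of the shift $\mathcal A = 1 + A$ (so that the constant $m$ and the factor $(1-m)$ land in the right places). Everything after that is routine formal-power-series differentiation, justified by $\mathcal A(0) = 1$. The one genuinely inventive step is guessing the antiderivative $(\mathcal A - 1)/\mathcal A^m$; this is reverse-engineered by differentiating the target identity. An alternative route, should the antiderivative be hard to spot, is to differentiate the claimed integral equation directly, verify that both sides satisfy the ODE $\mathcal A' = \mathcal A/\bigl((1-m)\mathcal A + m\bigr)$ with the same value at $t = 0$, and invoke uniqueness of the formal-power-series solution.
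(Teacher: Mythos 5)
Your proof is correct and uses essentially the same ingredients as the paper's: the ODE $\mathcal A'\bigl((1-m)\mathcal A+m\bigr)=\mathcal A$ and the antiderivative $(\mathcal A-1)\mathcal A^{-m}$. The only difference is direction — the paper transforms the claimed identity step by step into the formula of Corollary \ref{main_crl}, whereas you derive the identity forward from that formula, which is the logically cleaner way to present the same computation.
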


\begin{proof}
    First, we will re-express the given expression
    \[
        \mathcal A(m, t) = 1 + (\mathcal  A(m, t)) ^ m \int_{0} ^ {t} \left(\dfrac{1}{\mathcal A(m, t)}\right) ^ m dt.
    \]
    Rearranging terms, we obtain
    \[
        \int_{0} ^ {t} \left(\dfrac{1}{\mathcal  A(m, t)}\right) ^ m dt = \dfrac{\mathcal  A(m, t) - 1}{(\mathcal  A(m, t)) ^ m}.
    \]
    Because of $\mathcal A(m, t) = 1 + A(m, t)$, we obtain
    \[
        \int_{0} ^ {t} \left(\dfrac{1}{1 + A(m, t)}\right) ^ m dt = \dfrac{A(m, t)}{(1 + A(m, t)) ^ m}.
    \]
    Differentiating both sides by $t$ leads to the following relation 
    \[
        \left(\dfrac{1}{1 + A(m, t)}\right) ^ m = \left( \dfrac{1 + (1 - m) A(m, t)}{(1 + A(m, t)) ^ {m + 1}} \right) \frac{\partial}{\partial t}A(m, t). 
    \]
    Multiplying $(1 + A(m, t)) ^ {m + 1}$ by both sides, we obtain a differential equation
    \[
    (1 + A(m, t)) = (1 + (1 - m) A(m, t)) \frac{\partial}{\partial t} A(m, t).
    \]
    We can separate the differential equation as follows
    \[
    \dfrac{\partial A(m, t)}{\partial t} = \dfrac{(1 + A(m, t))}{1 + (1 - m) A(m, t)}. 
    \]
    Thus,
    \[
    \int_0 ^ t \dfrac{1 + (1 - m)A(m, t)}{1 + A(m, t)} dA(m, t) = \int_0^t dt = t.
    \]
    Note that $A(m, 0) = 0$. Solving the integral on the left-hand side, we obtain
    \[
    (1 - m) A(m, t) + m \log(1 + A(m, t)) = t.
    \]
    One can rewrite this expression using a compositional inverse on $t$ as follows
    \[
    ((1 - m) (A(m, t)) + m \log(1 + A(m, t))) \circ A^ {\langle -1 \rangle}(m, t) = A ^ {\langle -1 \rangle}(m, t).
    \]
    From which we obtain the following expression
    \[
    A(m, t) = (t (1 - m) + m \log( 1 +t)) ^ {\langle -1 \rangle}.
    \]
    Notice that this equation is the generating function for the number of $m\text{-partite}$ labeled series-reduced trees in Corollary \ref{main_crl}. Thus, the given relation holds.
\end{proof}

A table for the number of symbolic ultrametrics is given below. In particular, row $m = 2$ gives a sequence for the number of series-parallel networks with labeled edges \cite{RiordanSchroeder}. The row corresponding to $m = 3$ yields a sequence that counts the number of increasingly labeled processes which are described in \cite{Bodini}. We will provide a proof of this result in Section \ref{parallel_computing}.

\begin{table}[htb]
\centering
\begin{tabular}{c|rrrrrrrr}
$m \backslash s$ & 1 & 2 & 3 & 4 & 5 & 6 & 7 & 8 \\
\hline
1 & 1 & 1 & 1 & 1 & 1 & 1 & 1 & 1 \\
2 & 1 & 2 & 8 & 52 & 472 & 5504 & 78416 & 1320064 \\
3 & 1 & 3 & 21 & 243 & 3933 & 81819 & 2080053 & 62490339 \\
4 & 1 & 4 & 40 & 664 & 15424 & 460576 & 16808320 & 724904896 \\
5 & 1 & 5 & 65 & 1405 & 42505 & 1653125 & 78578225 & 4414067725 \\
6 & 1 & 6 & 96 & 2556 & 95256 & 4563936 & 267253776 & 18494891136 \\
7 & 1 & 7 & 133 & 4207 & 186277 & 10603999 & 737769781 & 60662126959 \\
8 & 1 & 8 & 176 & 6448 & 330688 & 21804224 & 1757138048 & 167347010944 \\
\end{tabular}
\caption{Number of symbolic ultrametrics $D:X\times X \rightarrow M$ s.t. $|X|=s$, $|M|=m.$}
\label{tab:symb_values}
\end{table}

It is also possible to find the number of $m$-partite labeled series-reduced trees such that all vertices are colored. We call such trees \emph{fully colored $m\text{-partite}$ labeled series-reduced trees}, and denote the number of trees with $s$ leaves as $a_s^{\text{fc}}(m)$.

\begin{thm} Number of fully colored $m$-partite labeled series-reduced trees with $s > 1$ leaves, denoted as $\bar a_m ^ s$, satisfy the following relation
    \[
    a_s^{\text{fc}}(m) = (m - 1) ^ s a_s(m) =  -(1 - m) ^ {s} \sum_{k = 0} ^ s (-m) ^ k  (!(s + k - 1)_k).
    \]
\end{thm}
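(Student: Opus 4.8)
The plan is to reduce the new quantity $a_s^{\text{fc}}(m)$ to the already-established count $a_s(m)$ by a purely multiplicative coloring argument, and then to substitute the closed form from Corollary \ref{main_crl}. The starting observation is that a fully colored $m$-partite labeled series-reduced tree is exactly an ordinary $m$-partite labeled series-reduced tree (inner vertices properly colored, leaves uncolored, as counted by $a_s(m)$) together with a proper coloring of its leaves. I would therefore consider the map that forgets the colors of the leaves; its fibers consist of all ways to color the leaves subject to the multipartite constraint, and recovering the forgotten colors shows this map is a bijection onto (underlying tree, proper leaf-coloring) pairs.

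Next I would count the size of each fiber. In a rooted tree each leaf has a unique neighbor, namely its parent, so the only restriction the multipartite condition imposes on a leaf is that its color differ from its parent's color. Since $s > 1$, every leaf's parent is an inner vertex and hence already carries one of the $m$ colors; thus each leaf admits exactly $m-1$ admissible colors. Crucially, two distinct leaves are never adjacent to one another in a tree, so these choices are mutually independent, and the number of proper leaf-colorings of any fixed underlying tree is $(m-1)^s$, uniformly in the tree's shape and inner coloring. Summing over all underlying trees gives
\[
a_s^{\text{fc}}(m) = (m-1)^s\, a_s(m).
\]

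Finally I would substitute $a_s(m) = (-1)^{s-1}\sum_{k=0}^{s}(-m)^k\,(!(s+k-1)_k)$ from Corollary \ref{main_crl} and simplify the leading sign using $(-1)^{s-1}(m-1)^s = -(1-m)^s$, which produces the claimed expression exactly.

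I expect the only genuine subtlety to be the role of the hypothesis $s > 1$: it is precisely what guarantees that every leaf has an inner, hence already colored, parent, so that the multiplicity $m-1$ is the same for all leaves. For $s=1$ the unique vertex is simultaneously root and leaf and has no parent to constrain its color, so the multiplicative factor would be $m$ rather than $m-1$ and the identity would fail; this is why the statement is restricted to $s>1$. Everything else is a routine independence observation together with a sign-bookkeeping step, so I anticipate no serious obstacle beyond stating the fiber count cleanly.
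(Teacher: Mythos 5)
Your proposal is correct and follows essentially the same argument as the paper: since $s>1$, each leaf's unique neighbor is its (already colored) parent, leaves are pairwise non-adjacent, so each tree admits exactly $(m-1)^s$ proper leaf-colorings, and the closed form follows from Corollary \ref{main_crl} after the sign simplification. Your write-up is in fact somewhat more careful than the paper's, which leaves the independence of the leaf choices and the sign bookkeeping implicit.
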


\begin{proof}
    By definition, leaves are vertices with out-degree $0$, and when $s > 1$, they have exactly one parent. So, there is $(m - 1)$ number of ways to color a leaf in an $m$-partite labeled series-reduced tree. Thus, $a_s ^ {\text{fc}}(m) = (m - 1) ^ s a_s(m)$. 
\end{proof}

A table for the number of fully colored $m\text{-partite}$ labeled series-reduced trees with \(s\) leaves is given below.

\begin{table}[h!]
\centering
\begin{tabular}{c|rrrrrr}
$m \backslash s$ & 1 & 2 & 3 & 4 & 5 & 6 \\
\hline
1 & 1 & 0 & 0 & 0 & 0 & 0 \\
2 & 2 & 2 & 8 & 52 & 472 & 5504 \\
3 & 3 & 12 & 168 & 3888 & 125856 & 5236416 \\
4 & 4 & 36 & 1080 & 53784 & 3748032 & 335759904 \\
5 & 5 & 80 & 4160 & 359680 & 43525120 & 6771200000 \\
6 & 6 & 150 & 12000 & 1597500 & 297675000 & 71311500000 \\
\end{tabular}
\caption{Number of fully colored $m\text{-partite}$ labeled series-reduced trees with \(s\) leaves.}
\end{table}

Another application of Theorem \ref{closed_form} concerns the number of $m\text{-partite}$ labeled series-reduced mobiles. A \emph{mobile}, or a circular tree, is a tree where branches are arranged around a vertex such that the order of branches is the same up to cyclic rotations.

\begin{prp}
    The generating function for the number of $m$-partite labeled series-reduced mobiles (circular rooted trees) can be expressed using the following relation

    \[
    G(m, t) = \left(t(1 - m) - m e ^ {-t} + m \right) ^ {\langle -1 \rangle}.
    \]

    An explicit formula for counting the number of $m$-partite labeled series-reduced mobiles can be expressed using associated Stirling numbers of the second kind $b(n, k)$.

    \[
    g_s(m) = (-1) ^ {s - 1} \sum_{k = 0} ^ s (-m) ^ k b(s + k - 1, k).
    \]

\end{prp}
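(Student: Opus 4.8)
The plan is to mirror the proof of Corollary \ref{main_crl}, replacing the ``unordered set of children'' weighting by a ``cyclic arrangement of children'' weighting and feeding the resulting degree function into Theorem \ref{main_result} and Theorem \ref{closed_form}. The key observation is combinatorial: in a mobile the children of each inner vertex are arranged in a cyclic order (up to rotation) rather than forming an unordered set as in the non-planar case. Since all leaf labels are distinct, the subtrees hanging off any fixed vertex are pairwise distinguishable, so a vertex of out-degree $k$ admits exactly $(k-1)!$ distinct cyclic arrangements of its children. As the weight $w(T)$ is a product over vertices, this cyclic factor is multiplicative across the whole tree, so counting mobiles amounts to setting the vertex weight $x_{c,k} = (k-1)!$ for every color $c$ and every $k \geq 1$; note $x_{c,1} = 0! = 1$, consistent with the normalization of the degree functions in Theorem \ref{main_result}.

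With this choice the degree function becomes
\[
x_c(t) = t + \sum_{n \geq 2} (n-1)! \,\frac{t^n}{n!} = \sum_{n \geq 1} \frac{t^n}{n} = -\log(1 - t),
\]
whose compositional inverse is $x_c^{\langle -1 \rangle}(t) = 1 - e^{-t}$, as one verifies from $-\log(1 - y) = t \iff y = 1 - e^{-t}$. Substituting into Theorem \ref{main_result} and summing over the $m$ colors gives
\[
G(m, t) = \left( t + m\bigl(1 - e^{-t} - t\bigr) \right)^{\langle -1 \rangle} = \left( t(1 - m) - m e^{-t} + m \right)^{\langle -1 \rangle},
\]
which is the first claimed identity.

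For the explicit formula I would expand $x_c^{\langle -1 \rangle}(t) = 1 - e^{-t} = \sum_{n \geq 1} (-1)^{n-1} \frac{t^n}{n!}$, so that $x_{c,n}^{\langle -1 \rangle} = (-1)^{n-1}$ and hence $\sum_{c=1}^m x_{c,n}^{\langle -1 \rangle} = m(-1)^{n-1}$. Plugging these into Theorem \ref{closed_form} yields
\[
g_s(m) = \sum_{k = 0}^s (-1)^k\, B_{s + k - 1,\, k}\bigl(0, -m, m, -m, \ldots, m(-1)^{s-1}\bigr).
\]
Using that the associated Stirling numbers of the second kind satisfy $b(n, k) = B_{n, k}(0, 1, 1, \ldots)$ (the second-kind analogue of Lemma \ref{derangements}, counting set partitions into $k$ blocks each of size at least $2$), I would strip off the scalars with the homogeneity relation $B_{n,k}(ab\,x_1, ab^2 x_2, \ldots) = a^k b^n B_{n,k}(x_1, x_2, \ldots)$ taking $a = -m$ and $b = -1$; indeed $(-m)(-1)^j = m(-1)^{j-1}$ reproduces the $j$-th argument above. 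This gives $B_{s+k-1, k}(0, -m, m, \ldots) = (-m)^k (-1)^{s+k-1} b(s + k - 1, k)$, and collecting signs via $(-1)^k (-m)^k (-1)^{s+k-1} = (-1)^{s-1} (-m)^k$ produces
\[
g_s(m) = (-1)^{s-1} \sum_{k = 0}^s (-m)^k\, b(s + k - 1, k),
\]
as claimed. The only genuinely non-routine step is the first one: justifying that $(k-1)!$ is the correct per-vertex weight and that distinctness of leaf labels prevents any over- or under-counting from automorphisms of the circular arrangement. Everything after that is a direct specialization of the two main theorems, exactly parallel to Corollary \ref{main_crl}.
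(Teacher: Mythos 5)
Your proposal is correct and follows essentially the same route as the paper: specialize $x_{c,k}=(k-1)!$ so that $x_c(t)=-\log(1-t)$ and $x_c^{\langle -1\rangle}(t)=1-e^{-t}$, apply Theorem \ref{main_result} for the generating function, and apply Theorem \ref{closed_form} together with $b(n,k)=B_{n,k}(0,1,1,\ldots)$ and the homogeneity of Bell polynomials for the explicit formula. The only difference is that you spell out the combinatorial justification of the $(k-1)!$ weight and the sign-extraction step more explicitly than the paper does.
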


\begin{proof}
    Set $x_{c, i} = (i - 1)!$ for all $1 \leq c \leq m$ and $i \geq 1$. Thus, the degree functions and their inverses are the following relations
    \[
    x_c(t) = \sum_{n \geq 1} \dfrac{t ^ n}{n} = -\log(1 - t),
    \]
    \[
    x_c^{\langle -1 \rangle}(t) = -e^{-t} + 1,
    \]
    \[
    \left( x_{c} ^ {\langle -1 \rangle}\right)_i = (-1) ^ {i + 1}.
    \]
    By Theorem \ref{main_result}, we obtain
    \[
    G(m, t) = \left(t(1 - m) + m (-e ^ {-t} + 1) \right) ^ {\langle -1 \rangle}.
    \]
    Additionally, we may plug the values of $x_{c, i} ^ {\langle -1 \rangle}$ into Theorem \ref{closed_form} to obtain the following  formula
    \begin{dmath*}
        g_s(m) = \sum_{k = 0} ^ s (-1) ^ k B_{s + k - 1, k} (0, -m, m, -m, m, \cdots, (-1) ^ {s + 1} m) = (-1) ^ {s - 1} \sum_{k = 0} ^ s (-m) ^ k B_{s + k - 1, k} (0, 1, 1, 1, \cdots).
    \end{dmath*}

    The associated Stirling numbers of the second kind (see p.76
    \cite{RiordanBook})
     are expressed as Bell Polynomials as follows

    \[
    b(n, k) = B_{n ,k} (0,1 , 1, 1, \cdots).
    \]
Using this definition, we obtain the following relation

    \[
    g_s(m) = (-1) ^ {s - 1} \sum_{k = 0} ^ s (-m) ^ k b(s + k - 1, k).
    \]
\end{proof}

Below we provide a table for the number of $m$-partite labeled mobiles (circular rooted trees) with $s$ leaves.

\begin{table}[h!]
\centering
\begin{tabular}{c|rrrrrrrr}
$m \backslash s$ & 1 & 2 & 3 & 4 & 5 & 6 & 7 & 8 \\
\hline
1 & 1 & 1 & 2 & 6 & 24 & 120 & 720 & 5040 \\
2 & 1 & 2 & 10 & 82 & 938 & 13778 & 247210 & 5240338 \\
3 & 1 & 3 & 24 & 318 & 5892 & 140304 & 4082712 & 140389824 \\
4 & 1 & 4 & 44 & 804 & 20556 & 675588 & 27135468 & 1288020708 \\
5 & 1 & 5 & 70 & 1630 & 53120 & 2225480 & 113950720 & 6895234480 \\
6 & 1 & 6 & 102 & 2886 & 114294 & 5819190 & 362107110 & 26628964710 \\
7 & 1 & 7 & 140 & 4662 & 217308 & 13022688 & 953817480 & 82561002048 \\
8 & 1 & 8 & 184 & 7048 & 377912 & 26052104 & 2195014072 & 218563826824 \\
\end{tabular}
\caption{Number of labeled multipartite mobiles (circular rooted trees).}
\end{table}
\newpage
It seems that $m = 2$ can also be expressed in terms of second-order Eulerian numbers OEIS \href{https://oeis.org/A112487}{A112487}.

\subsection{Multicolored chain-increasing binary trees}

In this section, we will show that labeled $(m + 1)\text{-partite}$ series-reduced trees are in bijection with the so-called $m\text{-colored}$ chain-increasing binary trees. This will be important in proving that the number of labeled $3\text{-partite}$ series-reduced trees is the same as the number of increasingly labeled processes.

We call a tree \emph{binary} if its vertices have at most $2$ children. A vertex is called a \emph{junction} if it has $2$ children. \emph{Chains} are the vertices with at most $1$ child. We call a tree $m\text{-colored}$ chain-increasing binary tree, if it satisfies the following three conditions

\begin{enumerate}
    \item Junctions are colored, but unlabeled,
    \item Chains are uncolored, but labeled,
    \item Labels are positioned along any branch from the root in increasing order.
\end{enumerate}

\noindent {\bf Example.} All eight $\text{1-colored}$ chain-increasing trees with $3$ chains. It is the same as the number $a_3(2) = 8$ of labeled bipartite series-reduced trees with $3$ leaves.

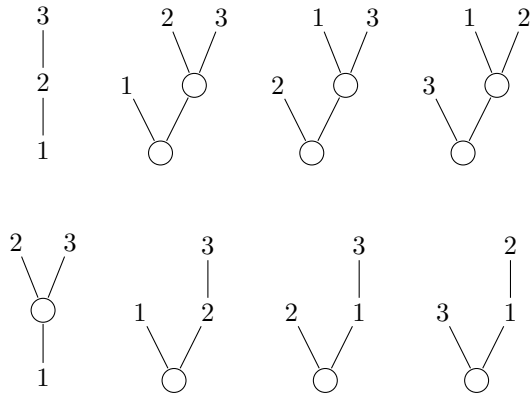
\begin{figure}[h!]
\centering
\begin{center}
    \begin{tabular}{ccccc}
    \begin{tikzpicture}[scale=0.6, grow'=up, level distance=1.5cm, every node/.style={scale=1}]
      \node[node_invisible]{1}
        child {node[node_invisible]{2}
          child {node[node_invisible]{3}}};
    \end{tikzpicture}
    &
    \begin{tikzpicture}[scale=0.6, grow'=up, level 1/.style={sibling distance=1.5cm}, level 2/.style={sibling distance=1.2cm}, every node/.style={scale=1}]
      \node[node_white, scale=3.2] {}
        child {node[node_invisible]{1}}
        child {node[node_white, scale=3.2] {}
          child {node[node_invisible]{2}}
          child {node[node_invisible]{3}}};
    \end{tikzpicture}
    &
    \begin{tikzpicture}[scale=0.6, grow'=up, level 1/.style={sibling distance=1.5cm}, level 2/.style={sibling distance=1.2cm}, every node/.style={scale=1}]
      \node[node_white, scale=3.2] {}
        child {node[node_invisible]{2}}
        child {node[node_white, scale=3.2] {}
          child {node[node_invisible]{1}}
          child {node[node_invisible]{3}}};
    \end{tikzpicture}
    &
    \begin{tikzpicture}[scale=0.6, grow'=up, level 1/.style={sibling distance=1.5cm}, level 2/.style={sibling distance=1.2cm}, every node/.style={scale=1}]
      \node[node_white, scale=3.2] {}
        child {node[node_invisible]{3}}
        child {node[node_white, scale=3.2] {}
          child {node[node_invisible]{1}}
          child {node[node_invisible]{2}}};
    \end{tikzpicture}
    
    \\[0.7cm]
    \begin{tikzpicture}[scale=0.6, grow'=up, 
        level 1/.style={sibling distance=1.5cm}, 
        level 2/.style={sibling distance=1.2cm}, 
        every node/.style={scale=1}]
      \node[node_invisible]{1}
        child {node[node_white, scale=3.2] (x1) {}
          child {node[node_invisible]{2}}
          child {node[node_invisible]{3}}};
    \end{tikzpicture}
    &
    \begin{tikzpicture}[scale=0.6, grow'=up, 
        level 1/.style={sibling distance=1.5cm}, 
        level 2/.style={sibling distance=1.2cm}, 
        every node/.style={scale=1}]
      \node[node_white, scale=3.2] (x1) {}
        child {node[node_invisible]{1}}
        child {node[node_invisible]{2}
          child {node[node_invisible]{3}}};
    \end{tikzpicture}
    &
    \begin{tikzpicture}[scale=0.6, grow'=up, 
        level 1/.style={sibling distance=1.5cm}, 
        level 2/.style={sibling distance=1.2cm}, 
        every node/.style={scale=1}]
      \node[node_white, scale=3.2] (x1) {}
        child {node[node_invisible]{2}}
        child {node[node_invisible]{1}
          child {node[node_invisible]{3}}};
    \end{tikzpicture}
    &
    \begin{tikzpicture}[scale=0.6, grow'=up, 
        level 1/.style={sibling distance=1.5cm}, 
        level 2/.style={sibling distance=1.2cm}, 
        every node/.style={scale=1}]
      \node[node_white, scale=3.2] (x1) {}
        child {node[node_invisible]{3}}
        child {node[node_invisible]{1}
          child {node[node_invisible]{2}}};
    \end{tikzpicture}
    \end{tabular}
\end{center}
\caption{Chain-increasing binary trees with $3$ chains.}
\end{figure}

If we color junctions in $m$ colors, one can see that the number of $m\text{-colored}$ chain-increasing binary trees with $3$ chains is $m ^ 0 + m ^ 2 + m ^ 2 + m ^ 2 + m + m + m + m = 1 + 4m + 3m ^ 2$.

\,

Let $y_s(m)$ denote the number of $m\text{-colored}$ chain-increasing binary trees with $s$ chains. Then, one can count the number of these trees using the following recurrence relation.

\begin{lm}\label{bin_enum}
    Let $y_1(m) = 1$. The number $y_s(m)$ of $m\text{-colored}$ chain-increasing binary trees with $s > 1$ chains can be computed using the following recurrence relation
    \[
    y_s(m) = y_{s - 1}(m) + m B_{s, 2} (y(m)).
    \]
\end{lm}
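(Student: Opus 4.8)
The plan is to prove the recurrence by a direct combinatorial decomposition of an $m$-colored chain-increasing binary tree with $s$ chains according to the type of its root, which is necessarily either a chain (out-degree $0$ or $1$) or a junction (out-degree $2$); these two possibilities are disjoint and exhaustive, so $y_s(m)$ will split as a sum of the corresponding counts. Throughout I would label the $s$ chains by $1, 2, \ldots, s$ and use condition (3): the chain-labels strictly increase along every branch from the root. I write $y(m) = (y_1(m), y_2(m), \ldots)$ for the sequence being fed into the Bell polynomial.

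For the first case (root is a chain), note that the root lies on every root-to-leaf branch, so the increasing condition forces it to carry the smallest label, namely $1$; moreover, since $s > 1$, this root-chain has exactly one child. Deleting the root and relabeling the remaining chains $2, \ldots, s$ by $1, \ldots, s-1$ in the unique order-preserving way yields an $m$-colored chain-increasing binary tree with $s-1$ chains. I would verify that this deletion-relabeling is a bijection between the trees whose root is a chain and the trees counted by $y_{s-1}(m)$, producing the summand $y_{s-1}(m)$.

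For the second case (root is a junction), the root is colored, contributing a factor of $m$, is unlabeled, and has exactly two children rooting two subtrees whose chain-label sets partition $\{1, \ldots, s\}$ into two nonempty blocks $S_1, S_2$ (nonempty because every subtree contains at least one leaf, which is a chain). Because the trees are non-planar, the two subtrees are unordered. Applying the same order-preserving relabeling to each block turns the two subtrees into a pair of chain-increasing binary trees on $|S_1|$ and $|S_2|$ chains, and conversely any unordered pair of such trees glued beneath a colored junction reconstructs the tree uniquely. Summing $y_{|S_1|}(m)\,y_{|S_2|}(m)$ over all unordered partitions $\{S_1, S_2\}$ of $\{1, \ldots, s\}$ into two nonempty blocks is exactly $B_{s,2}(y(m))$ by the definition of the Bell polynomial, so this case contributes $m\,B_{s,2}(y(m))$. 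Adding the two cases gives $y_s(m) = y_{s-1}(m) + m\,B_{s,2}(y(m))$.

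The main obstacle I anticipate is the increasing-labeling bookkeeping at the junction. I must argue carefully that the junction being \emph{unlabeled} is precisely what allows label $1$ to sit in either subtree, so that the junction case is governed by an unconstrained set partition (rather than one forced to place $1$ at a distinguished spot) and is genuinely disjoint from the chain case. The one place a symmetry error could slip in is the factor of $1/2$ implicit in $B_{s,2}$, which must match the non-planarity (unordered children); I would confirm this against the worked value $y_3(m) = 1 + 4m + 3m^2$, where Case A gives $y_2(m) = 1 + m$ and Case B gives $m \cdot 3\,y_1(m)\,y_2(m) = 3m + 3m^2$.
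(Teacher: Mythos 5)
Your proof is correct and follows essentially the same route as the paper's: a case split on whether the root is a chain (unary, forced to carry label $1$, giving $y_{s-1}(m)$) or a colored junction (giving $m$ color choices times $B_{s,2}(y(m))$ for the unordered pair of subtrees). You simply spell out the label-relabeling bijections and the sanity check at $s=3$ in more detail than the paper does.
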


\begin{proof}
    It is obvious $y_1(m) = 1$. Then, for $s > 1$, the root of a tree may either be a unary or a binary vertex. If the root is unary, then, the number of trees is $y_{s - 1}(m)$. If the root is binary, there is $m$ ways to color it and $B_{s, 2}(y(m))$ ways to attach trees with $s$ chains in total. Thus, we obtain that 

    \[
    y_s(m) = y_{s - 1}(m) + m B_{s, 2} (y(m)).
    \]
\end{proof}

Now, we show that $y_s(m - 1)$ is the same as $a_s(m)$ the number of labeled $m\text{-partite}$ series-reduced trees.

\begin{thm}\label{binary=series-reduced}
    Let $y_s(m - 1)$ be the number of $(m - 1)\text{-colored}$ chain-increasing binary trees and $a_s(m)$ the number of labeled $m\text{-partite}$ series-reduced trees. Then,

    \[
    y_s(m - 1) = a_s(m ).
    \]
\end{thm}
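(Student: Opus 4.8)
The plan is to pass to exponential generating functions and show that the sequence $y_s(m)$ and the sequence $a_s(m+1)$ are governed by the same first-order differential equation together with the same initial value, which forces them to coincide. Concretely, I set $Y(m, t) = \sum_{s \geq 1} y_s(m) \frac{t^s}{s!}$, derive an ODE for $Y(m,t)$ from the recurrence of Lemma \ref{bin_enum}, and compare it with the ODE satisfied by $A(m+1, t)$. Since the desired identity is $y_s(m-1) = a_s(m)$, equivalently $y_s(m) = a_s(m+1)$, it suffices to prove $Y(m,t) = A(m+1,t)$.

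First I would translate the recurrence $y_s(m) = y_{s-1}(m) + m B_{s,2}(y(m))$ into a functional equation. Two observations do the work. Because $B_{s,2}$ vanishes for $s < 2$ and, by the composition identity (\ref{iso_comp}) applied to $x(t) = t^2/2$, one has $\sum_{s} B_{s,2}(y(m)) \frac{t^s}{s!} = \tfrac12 Y(m,t)^2$, the binary-root contribution becomes $\tfrac{m}{2} Y(m,t)^2$. Because the shift $y_{s-1}(m)$ corresponds to integration, $\sum_{s\geq 2} y_{s-1}(m)\frac{t^s}{s!} = \int_0^t Y(m,u)\,du$. Accounting for the base term $y_1(m) = 1$ and summing the recurrence over $s \geq 2$ yields
\[
Y(m,t) - t = \int_0^t Y(m,u)\,du + \frac{m}{2}\, Y(m,t)^2 .
\]
Differentiating in $t$ and solving for the derivative gives the separable ODE $\frac{\partial}{\partial t} Y(m,t) = \dfrac{1 + Y(m,t)}{1 - m\,Y(m,t)}$ with $Y(m,0) = 0$.

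Next I would extract the matching ODE for $A(m+1,t)$. By Corollary \ref{main_crl}, $A(m,t)$ is the compositional inverse of $t(1-m) + m\log(1+t)$, so $(1-m)A(m,t) + m\log(1 + A(m,t)) = t$; differentiating in $t$ gives $\frac{\partial}{\partial t} A(m,t) = \dfrac{1 + A(m,t)}{1 + (1-m)A(m,t)}$ with $A(m,0)=0$. Substituting $m \mapsto m+1$ produces exactly $\frac{\partial}{\partial t} A(m+1,t) = \dfrac{1 + A(m+1,t)}{1 - m\,A(m+1,t)}$ with the same initial condition. Since a first-order ODE whose right-hand side is an analytic function of the unknown determines its formal power series solution uniquely once the constant term is fixed, I conclude $Y(m,t) = A(m+1,t)$, hence $y_s(m) = a_s(m+1)$ for every $s$; replacing $m$ by $m-1$ gives $y_s(m-1) = a_s(m)$.

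The main obstacle is the bookkeeping in the first step: verifying that the index shift $y_{s-1}(m)$ genuinely corresponds to integration of $Y$, that the Bell-polynomial term collapses to precisely $\tfrac{m}{2}Y^2$, and that the base case $s=1$ is handled correctly (where neither the unary nor the binary splitting applies, so the value $y_1(m)=1$ must be inserted by hand before summing). Once the integral equation is established, the remainder is a routine differentiation followed by an appeal to uniqueness of the power-series solution, and the numerical check $y_3(m) = 1 + 4m + 3m^2 = a_3(m+1)$ serves as a useful sanity test.
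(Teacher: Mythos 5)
Your proposal is correct and follows essentially the same route as the paper: both pass to the exponential generating function, convert the recurrence of Lemma \ref{bin_enum} into the integral equation $Y = t + \int_0^t Y + \tfrac{m}{2}Y^2$, and differentiate to reach the separable ODE $Y' = \frac{1+Y}{1-mY}$. The only (harmless) divergence is at the end: the paper integrates the ODE to recover the implicit relation $t = (1-m)Y + m\log(1+Y)$ and then inverts compositionally, whereas you derive the identical ODE for $A(m+1,t)$ from its compositional-inverse characterization and invoke uniqueness of the formal power series solution with fixed constant term, which is an equally valid way to close the argument.
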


\begin{proof}
    By Lemma \ref{bin_enum}, the sequence $y_s(m -1)$ satisfies the following relation 
\begin{equation}\label{recc_gamma}
        y_{s}(m - 1) = y_{s - 1}(m - 1) + (m - 1) B_{n, 2}(y(m - 1)).
    \end{equation}
    Let $Y(m, t)$ be its exponential generating function

    \[
    Y(m , t) = \sum_{s \geq 1} y_s(m) \frac{t^s}{s!}.
    \]
 Then (\ref{recc_gamma}) can be written in terms of generating functions 

    \begin{equation}\label{gamma_expgenfunc}
        Y(m - 1, t) = t + \sum_{s \geq 2} y_{s - 1}( m - 1) \frac{t ^ s}{s!} + (m - 1) \sum_{s \geq 2} B_{s, 2}(y(m - 1)) \frac{t ^ s}{s!}.
    \end{equation}
    Recall that the Bell product of two functions $f(t) = \sum_{n \geq 1} f_n \frac{t ^ n}{n!}$ and $g(t) = \sum_{n \geq 1} g_n \frac{t ^ n}{n!}$ can be given by 
    \[
    (f \circ g)(t) = \sum_{n \geq1 } \frac{t ^ n}{n!} \sum_{k = 1} ^ n f_k B_{n, k}(g_1, g_2, \cdots, g_{n - k + 1}).
    \]
    Therefore,
    \[
    \frac{Y(m - 1, t) ^ 2}{2} = \frac{t ^ 2}{2} \circ Y(m - 1, t) = \sum_{s \geq 2} B_{s, 2}(y(m - 1)) \frac{t ^ s}{s!}.
    \]
    Further,
    \[
    \sum_{s \geq 2} y_{s - 1}(m - 1) \frac{t ^ s}{s!} = \int_{0} ^ t Y(m - 1 ,t) dt.
    \]
    Substituting into (\ref{gamma_expgenfunc}) gives us 
    \[
    Y(m - 1, t) = t + \int_{0} ^ t Y(m - 1 ,t) dt + (m - 1)\frac{Y(m - 1, t) ^ 2}{2}.
    \]
    and
    \[
    \frac{\partial}{\partial t} Y(m - 1, t) = 1 + Y(m - 1, t) + (m - 1) Y(m - 1, t) \frac{\partial}{\partial t} Y(m - 1, t).
    \]
    Thus,
    \[
    \frac{\partial}{\partial t} Y(m - 1, t) = \frac{1 + Y(m - 1, t)}{1 + (1 - m) Y(m - 1, t)}.
    \]
    We have
    \[
    t = \int_0 ^ t \frac{1 + (1 - m ) Y(m - 1, t)}{1 + Y(m - 1, t)} dY(m - 1, t) = 
    \]
    \[
    \int_0 ^ t \left( 1 - \frac{m Y(m - 1, t)}{1 + Y(m - 1, t)}\right) dY(m - 1, t) = 
    \]
    \[
    Y(m - 1, t) - m \int_0 ^ t \frac{Y(m - 1, t)}{1 + Y(m - 1, t)} dY(m - 1, t) = 
    \]
    \[
    (1 - m) Y(m - 1, t) + m \log(1 + Y(m -1, t)).
    \]
    So we have established that
    \begin{equation}\label{t=relinY}
        t = (1 - m) Y(m - 1, t) + m \log(Y(m - 1, t)).
    \end{equation}
    Now, let us calculate the Bell product $t \circ f ^ {\langle -1 \rangle}(t)$. Since the composition of functions satisfies the left distributive law
    \[
    (f + g) \circ h = f \circ h + g \circ h,
    \]
    by (\ref{t=relinY})  we obtain
    \[
    Y ^ {\langle -1 \rangle}(m - 1, t) = t \circ Y ^ {\langle -1 \rangle}(m - 1, t) = 
    \]
    \[
    (1 - m) Y(m - 1, t) + m \log(Y(m - 1, t)) \circ Y ^ {\langle -1 \rangle} (m - 1, t) = 
    \]
    \[
    (1 - m) t + m \log(1 + t).
    \]
    By Corollary \ref{main_crl}, we have 
    \[
    Y(m - 1, t) = A(m, t).
    \]
  In other words, 
    \[
    y_s(m - 1) = a_s(m).
    \]
\end{proof}

\subsection{Increasing Parallel Processes}\label{parallel_computing}

Pure parallel processes are defined using the following grammar

\begin{enumerate}
    \item An atomic action denoted by $a, b, c, \cdots$ is a process,
    \item Prefixing $a.P$ of an action $a$ and process $P$ is a process,
    \item The composition $P_1 \parallel P_2$ is a process.
\end{enumerate}

Processes built from these rules can be expressed in terms of a \textit{process tree}. The size of a process tree is the number of actions in a tree.

Assigning increasing labels on the process tree is called an increasingly labeled process. It was shown in \cite{Bodini} that there is an isomorphism between the number of computations and increasingly labeled processes.

Below is an \textit{increasingly labeled} process tree of size $5$. The process is $(a.b) \parallel (c \parallel (e.d))$.

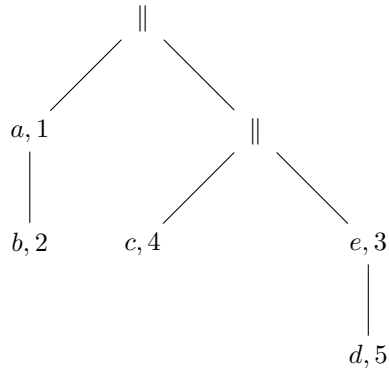
\begin{figure}[ht]
\begin{center}
    \begin{tikzpicture}[sibling distance=30mm, level distance=15mm, 
  every node/.style={font=\itshape}]
  \node {$\parallel$}
    child {node {$a, 1$}
      child {node {$b, 2$}}
    }
    child {node {$\parallel$}
      child {node {$c, 4$}}
      child {node {$e, 3$}
        child {node {$d, 5$}
        }
      }
    };
\end{tikzpicture}
\end{center}
\caption{An increasingly labeled tree with $5$ actions.}
\end{figure}

The formula for computing the number of increasingly labeled processes according to the number of actions was obtained in \cite{Bodini}. We find another formula, and that the number of increasingly labeled processes equals the number of $2\text{-colored}$ chain-increasing binary trees.

\begin{crl}
    The number of increasingly labeled processes with $s$ actions equals the number of $2\text{-colored}$ chain-increasing binary trees with $s$ chains. In other words,

    \[
    y_s(2) = (-1) ^ {s - 1} \sum_{k = 0} ^ {s} (-3) ^ {k} (!(s + k - 1)_k).
    \]
\end{crl}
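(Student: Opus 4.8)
The plan is to separate the statement into a combinatorial identity and a bookkeeping step. The closed formula is the easy half: granting that the number $i_s$ of increasingly labeled processes with $s$ actions equals $y_s(2)$, I would invoke Theorem \ref{binary=series-reduced} at $m=3$ to write $y_s(2)=y_s(3-1)=a_s(3)$, and then Corollary \ref{main_crl} at $m=3$ to get $a_s(3)=(-1)^{s-1}\sum_{k=0}^{s}(-3)^k(!(s+k-1)_k)$; substitution reproduces the displayed expression. Thus all of the content lies in proving that increasingly labeled processes with $s$ actions are equinumerous with $2$-colored chain-increasing binary trees with $s$ chains.

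For this I would set up an explicit bijection. A process tree is already a binary tree in which every action node has out-degree $0$ or $1$ and every $\parallel$ node has out-degree $2$; under the correspondence actions $\leftrightarrow$ chains and $\parallel$ nodes $\leftrightarrow$ junctions the two structural skeletons agree, the increasing labels on the actions become the increasing labels on the chains, and what remains is to account for the two colors at each junction. The point is that parallel composition is ordered, so each $\parallel$ node carries a left/right orientation, whereas a junction of a chain-increasing binary tree is unoriented but two-colored. Since a valid increasing labeling uses pairwise distinct integers, the two subtrees hanging from any $\parallel$ node carry disjoint, nonempty label sets and can be ordered canonically, say by least label; recording whether the process orientation agrees or disagrees with this canonical order is exactly a two-valued datum, i.e.\ a color in $\{1,2\}$. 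Reading the construction in both directions gives a bijection that preserves the number of actions, so $i_s=y_s(2)$.

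As a check, and as an alternative route, I would match exponential generating functions. Writing $I(t)$ for the EGF of increasingly labeled processes by number of actions, every process is either a \emph{thread} (root an action) or has a parallel root. A thread is a root action carrying the least label followed by nothing or by an arbitrary process, so the boxed-product rule for least-label roots gives $A'(t)=1+I(t)$ with $A(0)=0$ for the thread EGF $A(t)$; a parallel root is an ordered pair of processes, contributing $I(t)^2$. Hence $I=A+I^2$, and differentiating yields $I'=1+I+2II'$ with $I(0)=0$, which is precisely the differential equation satisfied by $Y(2,t)$ in the proof of Theorem \ref{binary=series-reduced} (take $m=3$ there). Uniqueness of the formal power series solution forces $I(t)=Y(2,t)$, again giving $i_s=y_s(2)$. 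I expect the main obstacle to be pinning down this counting convention: parallel composition must be treated as ordered, equivalently each junction must be two-colored, because the unordered binary convention collapses the equation to $I'=1+I+II'$ and yields $a_s(2)$, while a flattened associative $\parallel$ yields $s!$. Justifying the ordered convention therefore requires carefully matching the definition of a computation, or increasingly labeled process, in \cite{Bodini} to the objects counted here.
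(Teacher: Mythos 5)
Your proposal is correct and follows essentially the same route as the paper: identify increasingly labeled process trees with $2$-colored chain-increasing binary trees by trading the planar left/right order at each $\parallel$ node for a binary color, then apply Theorem \ref{binary=series-reduced} with $m=3$ and Corollary \ref{main_crl}. Your canonical ordering of the two subtrees by least label (possible since sibling subtrees carry disjoint nonempty label sets) makes precise the paper's informal ``two ways to permute the branches equals two colors'' step, and your EGF verification $I'=1+I+2II'$ matching the equation for $Y(2,t)$ is a sound independent check.
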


\begin{proof}
    It is evident that increasingly labeled process trees are equivalent to chain-increasing binary trees. It is because binary vertices in process trees are unlabeled, and unary vertices or leaves are labeled, and the labels increase from the root. In other words, the vertices that correspond to the composition operation are junctions, and the vertices that correspond to actions are chains. However, process trees are planar, which means that the order of branches matters. There are $2$ ways to permute the branches. Allowing permuting branches can be equivalently stated as coloring junctions in $2$ colors. Thus, the number of increasingly labeled processes with $s$ actions is the same as the number of $2\text{-colored}$ chain-increasing binary trees with $s$ chains.

    By Theorem \ref{binary=series-reduced}, we have 
    \[
        y_s(2) = a_s(3) = (-1) ^ {s - 1} \sum_{k = 0} ^ {s} (-3) ^ {k} (!(s + k - 1)_k).
    \]
\end{proof}

We obtain that the number of increasingly labeled processes also equals the number of labeled $3\text{-partite}$ series-reduced trees, which are bijective with symbolic ultrametrics $D : X \times X \rightarrow M$ such that $|M| = 3$. In other words, there may be a connection between symbolic ultrametrics and increasingly labeled processes.

It would be interesting to investigate if there is any connection between $m\text{-colored}$ chain-increasing binary trees and parallel processes for general $m > 2$.

\section{Unlabeled Multipartite Series-reduced Trees}

In this section, we enumerate multipartite series-reduced trees with \emph{unlabeled} leaves. Enumeration of rooted unlabeled series-reduced trees was considered in \cite{RiordanSchroeder}, \cite{RiordanShannon}. For simplicity, we will refer to these trees as series-reduced trees if they are uncolored, and $m\text{-partite}$ series-reduced trees otherwise. Series-reduced trees are also called \emph{homeomorphically irreducible trees} \cite{Harary}.

Let $\bar a_s$ be the number of rooted unlabeled series-reduced trees with $s$ leaves. It was shown in \cite{RiordanShannon} that

\begin{equation}\label{RiordanShannon}
    \bar a_s = \sum_{\{r_j \}} \prod_j \binom{\bar a_j + r_j - 1}{r_j}.
\end{equation}
where $\{ r_j\}$ is all partitions of $n$ with at least two parts.

\noindent{\bf Example.} Below are the first $10$ values of $\bar a_s$ (OEIS \href{https://oeis.org/A000669}{A000669}).
\[
1, 1, 2, 5, 12, 33, 90, 261, 766, 2312, \cdots
\]

We refine (\ref{RiordanShannon}) in the following way: let $\bar a_{s, k}$ denote the number of series-reduced trees with $s$ leaves and $k$ inner vertices. Writing the refinement of (\ref{RiordanShannon}) as $\bar a_s ^ {\text{ref}}(t) = \sum_{k = 0} ^ n \bar a_{s, k} t ^ k$, it is obvious that $\bar a_s(t = 1) = \bar a_s$.

\noindent{\bf Example.} Below are the series-reduced trees with $4$ leaves.

\begin{figure}[htb]
\centering
\begin{center}
    \begin{tabular}{ccccc}
    \begin{tikzpicture}[scale=0.6, grow'=up, level 1/.style={sibling distance=0.9cm}, every node/.style={scale=1}]
      \node[node_white, scale=3.2] (x23) {}
        child {node[node_invisible]{}}
        child {node[node_invisible]{}}
        child {node[node_invisible]{}}
        child {node[node_invisible]{}};
        \node[scale=1.0] at ($(x23)+(2.4em, 0em)$) {$t$};
    \end{tikzpicture}
    &
    \begin{tikzpicture}[scale=0.6, grow'=up, level 1/.style={sibling distance=1.5cm}, level 2/.style={sibling distance=1.2cm}, every node/.style={scale=1}]
      \node[node_white, scale=3.2] (x22) {}
        child {node[node_invisible]{}}
        child {node[node_white, scale=3.2] (x12){}
          child {node[node_invisible]{}}
          child {node[node_invisible]{}}
          child {node[node_invisible]{}}};
        \node[scale=1.0] at ($(x22)+(2.4em, 0em)$) {$t$};
        \node[scale=1.0] at ($(x12)+(2.4em, 0em)$) {$t$};
    \end{tikzpicture}
    &
    \begin{tikzpicture}[scale=0.6, grow'=up, level 1/.style={sibling distance=1.2cm}, level 2/.style={sibling distance=1.2cm}, every node/.style={scale=1}]
      \node[node_white, scale=3.2](x22){}
        child {node[node_invisible]{}}
        child {node[node_invisible]{}}
        child {node[node_white, scale=3.2] (x12){}
          child {node[node_invisible]{}}
          child {node[node_invisible]{}}};
          \node[scale=1.0] at ($(x22)+(2.4em, 0em)$) {$t$};
        \node[scale=1.0] at ($(x12)+(2.4em, 0em)$) {$t$};
    \end{tikzpicture}
    &
    
    \begin{tikzpicture}[scale=0.6, grow'=up, level 1/.style={sibling distance=1.5cm}, level 2/.style={sibling distance=1.2cm}, every node/.style={scale=1}]
      \node[node_white, scale=3.2](x22){}
        child {node[node_invisible]{}}
        child {node[node_white, scale=3.2] (x12){}
          child {node[node_invisible]{}}
          child {node[node_white, scale=3.2] (x32){}
            child {node[node_invisible]{}}
            child {node[node_invisible]{}}}};
      \node[scale=1.0] at ($(x22)+(2.4em, 0em)$) {$t$};
      \node[scale=1.0] at ($(x12)+(2.4em, 0em)$) {$t$};
      \node[scale=1.0] at ($(x32)+(2.4em, 0em)$) {$t$};
    \end{tikzpicture}
    &

    \begin{tikzpicture}[scale=0.6, grow'=up, level 1/.style={sibling distance=1.5cm}, level 2/.style={sibling distance=1.2cm}, every node/.style={scale=1}]
  \node[node_white, scale=3.2] (x22) {}
    child {node[node_white, scale=3.2] (x32){}
      child {node[node_invisible]{}}
      child {node[node_invisible]{}}}
    child {node[node_white, scale=3.2] (x12){}
      child {node[node_invisible]{}}
      child {node[node_invisible]{}}};
  \node[scale=1.0] at ($(x22)+(2.4em, 0em)$) {$t$};
  \node[scale=1.0] at ($(x32)+(-2.4em, 0em)$) {$t$};
  \node[scale=1.0] at ($(x12)+(2.4em, 0em)$) {$t$};
\end{tikzpicture}
    \end{tabular}
\end{center}
\caption{Series-reduced trees with $4$ leaves.}
\end{figure}
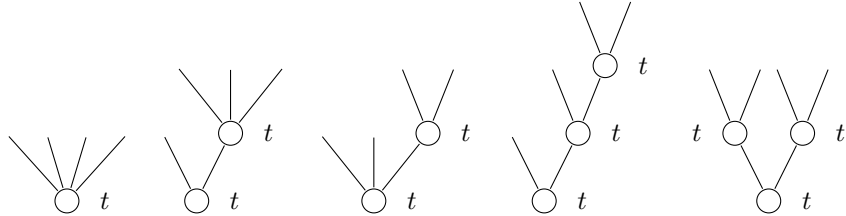
We obtain that $\bar a_4 ^ {\text{ref}}(t) = t + t ^ 2 + t ^ 2 + t ^ 3 + t ^ 3 = t + 2t ^ 2 + 2t ^ 3$. It is obvious $\bar a_1 ^ {\text{ref}}(t) = 1$. For $s > 1$ leaves, we find a recurrence relation to compute the polynomials $\bar a_s ^ {\text{ref}}(t).$

\begin{thm}\label{refinedRiordanShannon}
    Polynomial $\bar a^{\text{ref}}_s(t)$ satisfies the following recurrence relationship for $s > 1$ leaves
    \[
    \bar a_s ^ {\text{ref}}(t) = \frac{t}{s!} \sum_{j = 1} ^ s B_{s, j} \left( \left\{ n! \sum_{\substack{d \mid n \\ n / d \neq s}} \frac{1}{d} \bar a_{n / d}^{\text{ref}}\left(t ^ d \right) \right\}_{n = 1} ^ {s - j + 1} \right),
    \]
    and $\bar a_1 ^ {\text{ref}}(t) = 1.$
\end{thm}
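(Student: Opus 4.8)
The plan is to enumerate rooted unlabeled $m$-partite series-reduced trees by the same strategy used elsewhere in the paper, namely by expressing a tree as a root with a multiset of subtrees attached, but now tracking two subtleties that the labeled case avoided: the leaves are indistinguishable, and we must weight by $t^{(\text{number of inner vertices})}$ to track inner vertex count. For the uncolored refinement $\bar a_s^{\text{ref}}(t)$, the root itself is an inner vertex contributing one factor of $t$ (hence the overall factor $t$ out front), and the children of the root form a multiset of smaller series-reduced trees with at least two elements (series-reduced forbids out-degree one). First I would set up the generating function so that a single tree with $n$ leaves and $k$ inner vertices is counted with weight $t^k$, with the $s$-leaf coefficient packaged as $\bar a_s^{\text{ref}}(t)$ inside an exponential-type generating function in the leaf-counting variable.

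The heart of the argument is to convert the multiset structure into Bell polynomials. Since the children are an unordered collection of subtrees, I would use the standard substitution that turns ``a set partition into labeled blocks, each block carrying a subtree'' into $B_{s,j}$ evaluated at a sequence whose $n$-th entry is the generating contribution of a single subtree block of size $n$. The delicate point is that unlabeled leaves, when several identical subtrees are repeated, require the Pólya/cycle-index correction: a block of $d$ identical copies of a subtree on $n/d$ leaves contributes through the substitution $t^d$ into the refinement polynomial of the smaller tree, with the weight $\tfrac{1}{d}$ and the factorial normalization $n!$ arising from the cycle structure. This is exactly what produces the inner sum $\sum_{d \mid n,\; n/d \neq s} \tfrac{1}{d}\,\bar a_{n/d}^{\text{ref}}(t^d)$: summing over divisors $d$ accounts for grouping the $n$ leaves into $d$ symmetric copies, and the condition $n/d \neq s$ excludes the degenerate case where a child would be the whole tree itself (i.e., forbids a spurious out-degree-one root). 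I would justify each piece—the $\tfrac1d$, the $t^d$, and the $n!$—by comparing with the cycle index of the symmetric group acting on identical branches, which is the unlabeled-enumeration analogue of the Bell-product machinery from Section 2.

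With the per-block contribution identified as $c_n(t) = n!\sum_{d \mid n,\, n/d \neq s}\tfrac1d\,\bar a_{n/d}^{\text{ref}}(t^d)$, assembling $j$ children into the root via $B_{s,j}$ and summing over $j$ from $1$ to $s$ gives the multiset of branches, and the prefactor $\tfrac{t}{s!}$ supplies the root's $t$-weight together with the $1/s!$ that undoes the labeling implicit in the Bell-polynomial bookkeeping. The main obstacle I anticipate is rigorously deriving the divisor-sum block weight from first principles: one must carefully match the unlabeled repeated-subtree multiplicities against the exponent $t^d$ and the coefficient $1/d$, verifying that the substitution $\bar a_{n/d}^{\text{ref}}(t^d)$ correctly records both the leaf count $n$ and the inner-vertex count of $d$ disjoint copies (so that $t^k \mapsto t^{dk}$ is exactly right). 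I would pin this down by a small induction or by direct comparison with the worked $s=4$ example, checking that the recurrence reproduces $\bar a_4^{\text{ref}}(t) = t + 2t^2 + 2t^3$ and recovers $\bar a_s^{\text{ref}}(1) = \bar a_s$ so as to confirm consistency with equation~(\ref{RiordanShannon}).
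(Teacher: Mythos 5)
Your proposal is correct and follows essentially the same route as the paper: decompose a tree as a root (contributing the factor $t$) together with a multiset of smaller series-reduced branches, obtain the block weight $n!\sum_{d\mid n,\,n/d\neq s}\frac{1}{d}\bar a^{\text{ref}}_{n/d}(t^d)$ from the multiset construction, and extract $[u^s]$ of the resulting exponential via Bell polynomials with the prefactor $\frac{t}{s!}$. The only cosmetic difference is that you invoke the P\'olya/cycle-index form of the multiset generating function, whereas the paper derives the same divisor sum by explicitly taking $\log\prod_{\alpha}(1-u^{l(\alpha)}t^{i(\alpha)})^{-1}$ and exponentiating back; your reading of the condition $n/d\neq s$ as excluding an out-degree-one root matches the paper's.
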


\begin{proof}
    Let $\bar{\mathscr A}_{s}$ denote the set of all non-empty series-reduced trees with no more than $s$ leaves. Also, if $\alpha \in \bar{\mathscr A}_{s}$, denote $l(\alpha)$ as the number of leaves and $i(\alpha)$ as the number of inner vertices in tree $\alpha$. Then, for all $s > 0$, the following holds
    \begin{equation}\label{defa^ref} 
        \bar a_s ^ {\text{ref}}(t) = \sum_{\substack{\alpha \in \bar{\mathscr A}_{s} \\ l(\alpha) = s}} t ^ {i(\alpha)}.
    \end{equation}
    Also, it is evident that
    \begin{equation}\label{raw_unlabeled}
        \bar a_s ^ {\text{ref}}(t) = t [u ^ s] \prod_{\alpha \in \bar{\mathscr A}_{s - 1}} \sum_{k \geq 1} \left(u ^ {l(\alpha)} t ^ {i(\alpha)}\right) ^ k = t [u ^ s] \prod_{\alpha \in \bar{\mathscr A}_{s - 1}} \frac{1}{1 - u ^ {l(\alpha)} t ^ {i(\alpha)}}.
    \end{equation}
    Expression (\ref{raw_unlabeled}) can be thought of as collecting trees with no more than $s - 1$ leaves and attaching them to a common root. The reason we only collect trees with no more than $s - 1$ leaves is that we cannot add a tree with $s$ leaves to the root, because the root would have an out-degree $1$, which is not allowed in series-reduced trees. 
    
    Our goal is to compute the coefficient of $u ^ s$. Let's introduce a temporary variable $r_s(u)$ which is the $\log$ of the product in (\ref{raw_unlabeled})
    \[
    r_s(u) = \log \prod_{\alpha \in \bar{\mathscr A}_{s - 1}} \frac{1}{1 - u ^ {l(\alpha)} t ^ {i(\alpha)}} =
    \sum_{\alpha \in \bar{\mathscr A}_{s - 1}}\log \frac{1}{1 - u ^ {l(\alpha)} t ^ {i(\alpha)}} =
    \]
    \[
    \sum_{\alpha \in \bar{\mathscr A}_{s - 1}} \sum_{d \geq 1} \frac{u ^ {l(\alpha) d} t ^ {i(\alpha) d}}{d} =
    \sum_{d \geq 1} \sum_{\alpha \in \bar{\mathscr A}_{s - 1}} u ^ {l(\alpha) d} \frac{\left(t ^ {i(\alpha)}\right) ^ d}{d}.
    \]
    Take  $n = l(\alpha) d.$ We obtain
    \[
    r_s(u) = \sum_{n \geq 1} u ^ n \sum_{d | n} \frac{1}{d} \sum_{\substack{\alpha \in \bar{\mathscr A}_{s - 1} \\ l(\alpha) = n / d}} \left(t ^ {i(\alpha)}\right) ^ d.
    \]
    Here $d|n$ means $d$ is a divisor of $n$. By (\ref{defa^ref}), we get
    \begin{equation}\label{exact_r_s(u)}
        r_s(u) = \sum_{n \geq 1} \frac{u ^ n}{n!} n! \sum_{\substack{d | n \\ n / d < s}} \frac{1}{d} \bar a ^ {\text{ref}}_{n / d} (t ^ d).
    \end{equation}
    By definition of $r_s(u)$, we know that
    \begin{equation}\label{def_r(u)}
        [u ^ s]e ^{r_s(u)} = [u ^ s] \prod_{\alpha \in \bar{\mathscr A}_{s - 1}} \frac{1}{1 - u ^ {l(\alpha)} t ^ {i(\alpha)}}.
    \end{equation}

    Let us use the series expansion (\ref{exact_r_s(u)}) to evaluate $e ^ {r_s(u)}$ by 
    Bell product
    \[
    e ^ {r_s(u)} = 1 + \sum_{m \geq 1} \frac{r_s(u) ^ m}{m!} = 1 + \sum_{m \geq 1} \frac{u ^ m}{m!} \sum_{j = 1} ^ {m} B_{m, j}\left(\left\{n! \sum_{\substack{d | n \\ n / d <s}} \frac{1}{d} \bar a ^ {\text{ref}}_{n / d} (t ^ d)\right\}_{n = 1} ^ {m - j + 1}\right).
    \]
    By (\ref{def_r(u)}), the expression for the coefficient is
    \[
    [u ^ s] \prod_{\alpha \in \bar{\mathscr A}_{s - 1}} \frac{1}{1 - u ^ {l(\alpha)} t ^ {i(\alpha)}} = \frac{1}{s!}\sum_{j = 1} ^ {s}B_{s, j}\left(\left\{n! \sum_{\substack{d | n \\ n / d < s}} \frac{1}{d} \bar a ^ {\text{ref}}_{n / d} (t ^ d)\right\}_{n = 1} ^ {s - j + 1}\right).
    \]
    Note that the condition $n / d < s$ can be strengthened to $n / d \neq s$ because of $n \leq s - j + 1 \leq s$. Thus, 
    \[
    \bar a_s ^ {\text{ref}}(t) = \frac{t}{s!} \sum_{j = 1} ^ s B_{s, j} \left( \left\{ n! \sum_{\substack{d \mid n \\ n /d \neq s}} \frac{1}{d} \bar a_{n / d}^{\text{ref}}\left(t ^ d \right) \right\}_{n = 1} ^ {s - j + 1} \right).
    \]
\end{proof}

The numerical values of coefficients were obtained by Riordan (\cite{RiordanSchroeder}, p.6)

\begin{table}[h!]
\centering
\begin{tabular}{c|rrrrrrrrr}
$k \backslash n$ & 2 & 3 & 4 & 5 & 6 & 7 & 8 & 9 & 10 \\
\hline
1 & 1 & 1 & 1 & 1 & 1 & 1 & 1 & 1 & 1 \\
2 &   & 1 & 2 & 3 & 4 & 5 & 6 & 7 & 8 \\
3 &   &   & 2 & 5 & 10 & 16 & 24 & 33 & 44 \\
4 &   &   &   & 3 & 12 & 29 & 57 & 99 & 157 \\
5 &   &   &   &   & 6 & 28 & 84 & 192 & 382 \\
6 &   &   &   &   &   & 11 & 66 & 231 & 615 \\
7 &   &   &   &   &   &    & 23 & 157 & 634 \\
8 &   &   &   &   &   &    &    & 46 & 373 \\
9 &   &   &   &   &   &    &    &    & 98 \\
\hline
$\sum$ & 1 & 2 & 5 & 12 & 33 & 90 & 261 & 766 & 2312 \\
\end{tabular}
\caption{Number of rooted series-reduced trees with $s$ leaves and $k$ inner vertices.}
\end{table}

We can now compute the number $\bar a_s(m)$ of multipartite series-reduced trees with $s$ leaves. For that, we use Theorem \ref{refinedRiordanShannon}. Note that we only color inner vertices. 

\begin{thm}\label{mpart-unlabeled}
    The number $\bar a_s(m)$ of multipartite series-reduced trees with $s > 1$ leaves can be expressed as follows
    \[
    \bar a_s(m) = \frac{m}{m - 1}\bar a_s ^ {\text{ref}}(m - 1).
    \]
\end{thm}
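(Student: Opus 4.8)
The plan is to stratify the set of $m$-partite series-reduced trees with $s$ leaves according to their underlying uncolored series-reduced tree, and then count, for each such tree, the number of admissible colorings of its inner vertices. Recall that in an $m$-partite series-reduced tree only the inner vertices are colored (leaves carry no color), and the multipartite condition forbids two adjacent inner vertices from sharing a color. Hence, for a fixed uncolored tree, the number of colorings is exactly the number of proper $m$-colorings of the graph induced on its inner vertices.

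The key structural observation I would establish first is that, for $s > 1$, the inner vertices of a series-reduced tree induce a connected subtree. Indeed, if $v$ is an inner vertex that is not the root, its parent $u$ has $v$ as a child, so $u$ has out-degree at least $1$; since a series-reduced tree has no vertex of out-degree $1$, in fact $u$ has out-degree at least $2$ and is therefore itself an inner vertex. Thus the parent of any inner vertex is an inner vertex, and the inner vertices form a subtree rooted at the root. If this subtree has $k$ vertices, the number of proper $m$-colorings of a tree on $k$ vertices is the familiar chromatic count $m(m-1)^{k-1}$: choose the root's color in $m$ ways and each remaining vertex, processed from the root downward, in $m-1$ ways (each is adjacent to exactly one already-colored vertex, namely its parent).

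With these two facts I would assemble the result. Grouping the uncolored series-reduced trees with $s$ leaves by their number $k$ of inner vertices, there are $\bar a_{s,k}$ such trees, each admitting $m(m-1)^{k-1}$ colorings, so
\[
\bar a_s(m) = \sum_{k} \bar a_{s,k}\, m(m-1)^{k-1} = \frac{m}{m-1}\sum_{k} \bar a_{s,k}(m-1)^k = \frac{m}{m-1}\,\bar a_s^{\text{ref}}(m-1),
\]
where the last equality is just the definition $\bar a_s^{\text{ref}}(t) = \sum_k \bar a_{s,k}\, t^k$ evaluated at $t = m-1$.

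The computation itself is routine; the only point requiring genuine care is the structural claim that the inner vertices form a single tree rather than a forest, since this is precisely what turns the coloring count into the clean factor $m(m-1)^{k-1}$ and makes the prefactor $\frac{m}{m-1}$ appear. The hypothesis $s > 1$ enters exactly here: it guarantees that the root is an inner vertex, so the inner-vertex set is nonempty and connected through the root, and it also ensures the denominator $m-1$ is matched by the $(m-1)^k$ produced in every term of the sum.
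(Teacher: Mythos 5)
Your proof is correct and follows essentially the same route as the paper: group the colored trees by their underlying uncolored tree and multiply by the chromatic count $m(m-1)^{k-1}$ for a tree on $k$ inner vertices. You supply the one detail the paper leaves implicit---that the inner vertices induce a connected subtree containing the root, which is what justifies the chromatic formula---so your write-up is, if anything, more complete than the original.
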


\begin{proof}
    Note that the number of ways to color a tree with $k$ inner vertices, such that no two adjacent inner vertices have the same color, is $m (m - 1) ^ {k - 1}$ where $m$ is the number of colors. Thus, it is obvious that the number of $m\text{-partite}$ series-reduced trees is expressed as follows

    \[
    \bar a_s(m) = \bar a_s ^ {\text{ref}}(m - 1) \frac{1}{m - 1} m.
    \]
\end{proof}
\noindent{\bf Example.} Below we provide polynomials $\bar a_s(m)$ for $s = 1, 2, \cdots, 8.$
\[
\bar a_1(m) = 1,
\]
\[
\bar a_2(m) = m,
\]
\[
\bar a_3(m) = m ^ 2,
\]
\[
\bar a_4(m) = m - 2m ^ 2 + 2m ^ 3,
\]
\[
\bar a_5(m) = 2 m^2 - 4 m^3 + 3 m^4,
\]
\[
\bar a_6(m) = m - 4 m^2 + 10 m^3 - 12 m^4 + 6 m^5,
\]
\[
\bar a_7(m) = 3 m^2 - 13 m^3 + 27 m^4 - 27 m^5 + 11 m^6,
\]
\[
\bar a_8(m) = 3 m - 15 m^2 + 42 m^3 - 79 m^4 + 99 m^5 - 72 m^6 + 23 m^7.
\]

The number $\bar a_s(m)$ of $m\text{-partite}$ series-reduced trees with $s$ leaves is given in the table below.

\begin{table}[h!]
\centering
\begin{tabular}{c|rrrrrrrr}
$m \backslash s$ & 1 & 2 & 3 & 4 & 5 & 6 & 7 & 8 \\
\hline
1 & 1 & 1 & 1 & 1 & 1 & 1 & 1 & 1 \\
2 & 1 & 2 & 4 & 10 & 24 & 66 & 180 & 522 \\
3 & 1 & 3 & 9 & 39 & 153 & 723 & 3321 & 16479 \\
4 & 1 & 4 & 16 & 100 & 544 & 3652 & 23536 & 165532 \\
5 & 1 & 5 & 25 & 205 & 1425 & 12405 & 102825 & 936765 \\
6 & 1 & 6 & 36 & 366 & 3096 & 33126 & 335556 & 3755286 \\
7 & 1 & 7 & 49 & 595 & 5929 & 75271 & 900865 & 11958667 \\
8 & 1 & 8 & 64 & 904 & 10368 & 152328 & 2102976 & 32301144 \\
\end{tabular}
\caption{The number of $m\text{-partite}$ series-reduced trees with $s$ leaves.}
\end{table}

One can also obtain the number $\bar a_s ^ {\text{fc}}(m)$ of \emph{fully colored} $m\text{-partite}$ series-reduced trees where inner vertices and leaves are colored.

\begin{thm}
    Number $\bar a_s ^ {\text{fc}}(m)$ of fully colored $m\text{-partite}$ series-reduced trees with $s > 1$ leaves can be written as
    \[
    \bar a_s ^ {\text{fc}}(m) = m (m - 1) ^ {s - 1} \bar a_s ^ {ref}(m - 1).
    \]
\end{thm}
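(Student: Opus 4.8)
The plan is to reuse the counting argument from Theorem \ref{mpart-unlabeled}, tacking on a separate accounting for the colors placed on the leaves, exactly in parallel with the labeled identity $a_s^{\text{fc}}(m) = (m-1)^s a_s(m)$ proved earlier. The starting point is the observation, already recorded in Theorem \ref{mpart-unlabeled}, that a fixed uncolored series-reduced tree with $k$ inner vertices has $m(m-1)^{k-1}$ proper colorings of its inner vertices (the inner vertices form a tree on $k$ nodes, and a proper $m$-coloring of a tree on $k$ nodes is counted by $m(m-1)^{k-1}$). Summing this weight over the refinement polynomial gives $\bar a_s(m) = \frac{m}{m-1}\,\bar a_s^{\text{ref}}(m-1)$, which I will take as known.

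The next step is to color the leaves. For $s > 1$ every leaf $\ell$ has exactly one neighbor, namely its parent inner vertex $p(\ell)$, so the only constraint a proper coloring imposes on $\ell$ is $col(\ell)\neq col(p(\ell))$. The key structural point I would emphasize is that distinct leaves are never adjacent to one another — even two leaves sharing a common parent are joined by no edge — so once the inner vertices are colored, the color choices at the $s$ leaves are mutually independent, each leaf ranging freely over the $m-1$ colors distinct from its parent's color. This contributes a clean multiplicative factor of $(m-1)^s$, independent of the chosen inner-vertex coloring and of the tree's shape.

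Combining the two factors and substituting the formula from Theorem \ref{mpart-unlabeled} yields
\[
\bar a_s^{\text{fc}}(m) = (m-1)^s\,\bar a_s(m) = (m-1)^s\cdot\frac{m}{m-1}\,\bar a_s^{\text{ref}}(m-1) = m(m-1)^{s-1}\,\bar a_s^{\text{ref}}(m-1),
\]
which is the claimed identity. There is no real obstacle here; the only point requiring care is the independence of the leaf colorings, i.e.\ confirming that sibling leaves impose no mutual constraint, which is immediate from the tree being acyclic and the restriction $s > 1$ guaranteeing that each leaf genuinely has a parent to avoid. I would also note in passing that the hypothesis $s>1$ is essential, since the single-vertex tree ($s=1$) has a leaf that is also the root and has no parent, so the $(m-1)^s$ factor would be spurious there.
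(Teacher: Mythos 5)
Your argument is essentially the same as the paper's own proof: for $s>1$ each leaf has a unique inner neighbour (its parent), so it can be coloured in $m-1$ ways, giving the factor $(m-1)^s$ on top of $\bar a_s(m)$, after which Theorem \ref{mpart-unlabeled} is substituted. If anything your write-up is the more consistent one, since the paper's proof records the intermediate identity as $\bar a_s^{\text{fc}}(m)=(m-1)^{s-1}\bar a_s(m)$ (an apparent exponent slip that would yield $m(m-1)^{s-2}\bar a_s^{\text{ref}}(m-1)$), whereas your $(m-1)^s\,\bar a_s(m)$ is the version that actually produces the stated formula and matches the tabulated values.
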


\begin{proof}
    If $s > 1$, there is exactly $(m - 1)$ ways to color a leaf. Then, there is $(m - 1) ^ s$ ways to color an $m\text{-partite}$ series-reduced tree (only inner vertices are colored) with $s$ leaves so it becomes fully colored. Thus, we obtain that
    \[
    \bar a_s ^ {\text{fc}}(m) = (m - 1) ^ {s - 1} \bar a_s(m).
    \]
    By Theorem \ref{mpart-unlabeled}, we get
    \[
    \bar a_s ^ {\text{fc}}(m) = m (m - 1) ^ {s - 1} \bar a_s ^ {ref}(m - 1).
    \]
\end{proof}

\newpage

Below are the numerical values $\bar a^{\text{fc}}_s(m)$ of the fully colored $m\text{-partite}$ series-reduced trees.

\begin{table}[h]
\centering
\begin{tabular}{c|cccccc}
$m \backslash s$ & 1 & 2 & 3 & 4 & 5 & 6 \\
\hline
1 & 1 & 0 & 0 & 0 & 0 & 0 \\
2 & 2 & 2 & 4 & 10 & 24 & 66 \\
3 & 3 & 12 & 72 & 624 & 4896 & 46272 \\
4 & 4 & 36 & 432 & 8100 & 132192 & 2662308 \\
5 & 5 & 80 & 1600 & 52480 & 1459200 & 50810880 \\
6 & 6 & 150 & 4500 & 228750 & 9675000 & 517593750 \\
\end{tabular}
\caption{Number of fully colored $m\text{-partite}$ series-reduced trees with \(s\) leaves.}
\end{table}
Corollary \ref{main_crl} gives the exact form of coefficients in $a_s(m)$ (number of labeled $m\text{-partite}$ series-reduced trees). They are the number of derangements with a fixed number of cycles. It would be interesting to find a nice formula for the coefficients of $\bar a_s(m)$ and give an interpretation.

\section{Acknowledgements}

This research was funded by the  Science Committee of the Ministry of Science and Higher Education of the Republic of Kazakhstan (Grant No. BR 28713025). 


\end{document}